\documentclass[11pt]{article}
\usepackage[dvips]{graphicx}
\usepackage[latin1]{inputenc}
\usepackage{amsmath,amssymb,amsthm,mathrsfs,amsfonts}

\newtheorem{theorem}{Theorem}[section]
\newtheorem{lemma}[theorem]{Lemma}
\newtheorem{proposition}[theorem]{Proposition}

\title{On chromatic indices of finite affine spaces}

\author{Gabriela Araujo-Pardo \footnotemark[2] \and
Gy{\" o}rgy Kiss \footnotemark[3] \and
Christian Rubio-Montiel \footnotemark[4] \and
Adri{\' a}n V{\' a}zquez-{\' A}vila \footnotemark[5]}

\begin{document}

\maketitle

\def\thefootnote{\fnsymbol{footnote}}
\footnotetext[2]{Instituto de Matem{\' a}ticas, Universidad Nacional Aut{\' o}noma de M{\' e}xico, Ciudad Universitaria, 04510, Mexico City, Mexico, {\tt garaujo@matem.unam.mx}.}
\footnotetext[3]{Department of Geometry and MTA-ELTE Geometric and Algebraic Combinatorics Research Group, E{\" o}tv{\" o}s Lor{\' a}nd University, H-1117 Budapest, P{\' a}zm{\' a}ny s. 1/c, Hungary, and FAMNIT, University of Primorska, 6000 Koper, Glagolja\v ska 8, Slovenia, {\tt kissgy@cs.elte.hu}.}
\footnotetext[4]{Department of Algebra, Comenius University, Mlynska dolina, 84248, Bratislava, Slovakia, and Divisi{\' o}n de Matem{\' a}ticas e Ingenier{\' i}a, FES Acatl{\' a}n, Universidad Nacional Aut{\'o}noma de M{\' e}xico, 53150 Naucalpan, Mexico, {\tt christian.rubio@fmph.uniba.sk}.} 
\footnotetext[5]{Subdirecci{\' o}n de Ingenier{\' i}a y Posgrado, Universidad Aeron{\' a}utica en Quer{\' e}taro, 
Parque Aeroespacial Quer{\' e}taro, 76270, Quer{\' e}taro, M{\' e}xico, {\tt adrian.vazquez@unaq.edu.mx}.}

\begin{abstract}
The pseudoachromatic index of the finite affine space $\mathrm{AG}(n,q),$ denoted by $\psi'(\mathrm{AG}(n,q)),$ 
is the the maximum number of colors in any complete line-coloring of $\mathrm{AG}(n,q).$ 
When the coloring is also proper, the maximum number of colors is called the achromatic index of $\mathrm{AG}(n,q).$
We prove that if
$n$ is even then $\psi'(\mathrm{AG}(n,q))\sim q^{1.5n-1}$; while when $n$ is odd the value is bounded by $q^{1.5(n-1)}<\psi'(\mathrm{AG}(n,q))<q^{1.5n-1}$. 
Moreover, we prove that the achromatic index of $\mathrm{AG}(n,q)$ is $q^{1.5n-1}$ for even $n,$ and we provides the exact values of 
both indices in the planar case.

%A complete coloring in the lines of the finite affine space $\mathrm{AG}(n,q)$ is a coloring with the property that any two colors appears on at least one of its %points. In this paper we estimate the maximum number of colors using in any complete coloring of $\mathrm{AG}(n,q)$ called the pseudoachromatic index %$\psi'(\mathrm{AG}(n,q))$ of $\mathrm{AG}(n,q)$: when the dimension $n$ is even then $\psi'(\mathrm{AG}(n,q))\sim q^{1.5n-1}$; when $n$ is odd then %$q^{1.5(n-1)}<\psi'(\mathrm{AG}(n,q))<q^{1.5n-1}$. We also study the notion of the achromatic index $\alpha'(\mathrm{AG}(n,q))$ of $\mathrm{AG}(n,q)$ proving %that $\alpha'(\mathrm{AG}(n,q))\sim q^{1.5n-1}$ when the dimension $n$ is even. Finally, we found the exact value of the pseudoachromatic index of the affine %plane of order $q$, namely, $\lfloor \tfrac{(q+1)^2}{2} \rfloor$.
\end{abstract}

%%%%%%%%%%%%%%%%%%%%%%%%%%%%%%%%%%%%%%%%%%%%%%%%%%%%%%%%%%%%%%%%%%%%%%%%%%%%%%%%%%%%%%%%%%%%%%%%%%%%%%%%%%

\section{Introduction}

This paper is motivated by the well-known combinatorial conjecture about 
colorings of finite linear spaces formulated by Erd{\H o}s, Faber and Lov{\' a}sz in 1972. 
As a starting point, we briefly introduce some definitions and give the conjecture. Let $\mathbf{S}$ be a finite linear space.
A \emph{line-coloring} of $\mathbf{S}$ with $k$ colors is a surjective function $\varsigma $ from the lines of $\mathbf{S}$ to the set of colors $[k]=\{1,\dots,k\}$. For short, a line-coloring  with $k$ colors is called \emph{$k$-coloring}. If $\varsigma \colon \mathbf{S}\rightarrow [k]$ is a $k$-coloring and $i\in [k]$ then the subset of lines $\varsigma ^{-1}(i)$ is called the $i$-th \emph{color class} of $\varsigma .$ A $k$-coloring of $\mathbf{S}$ is \emph{proper} if any two lines from the same color class have no point in common. The \emph{chromatic index} $\chi'(\mathbf{S})$ of $\mathbf{S}$ is the smallest $k$ for which there exists a proper $k$-coloring of $\mathbf{S}.$ The \emph{Erd{\H o}s-Faber-Lov{\' a}sz conjecture} (1972) states that if a finite linear space $\mathbf{S}$ contains $v$ points then $\chi'(\mathbf{S})\leq v,$ see \cite{MR0409246,MR602413}.

Many papers deal with the conjecture for particular classes of linear spaces. For instance, if each line of $\mathbf{S}$ has the same number $\kappa$ of points then $\mathbf{S}$ is called a \emph{block design} or a \emph{$(v,\kappa)$-design}. The conjecture is still open for designs even when $\kappa=3,$ 
however, it was proved for finite projective spaces by Beutelspacher, Jungnickel and Vanstone \cite{bjv}. 
It is not hard to see that the
conjecture is also true for the $n$-dimensional finite affine space of order $q$, denoted by $\mathrm{AG}(n,q),$ which has $q^n$ points. In fact, 
\begin{equation}\label{eq:igualdad}
\chi'(\mathrm{AG}(n,q))= \frac{q^{n}-1}{q-1}.
\end{equation}
Related results proved by some authors of this paper can be found in \cite{ARV,AV}.

A natural question is to determine similar, but slightly different color parameters in finite linear spaces. 
A $k$-coloring of $\mathbf{S}$ is \emph{complete} if for each pair of different colors $i$ and $j$ there exist two intersecting lines of $\mathbf{S}$,
such that one of them belongs to the $i$-th and the other one to the $j$-th color class. Observe that any proper coloring of $\mathbf{S}$ 
with $\chi'(\mathbf{S})$ colors is a complete coloring. The \emph{pseudoachromatic index} $\psi'(\mathbf{S})$ of $\mathbf{S}$ 
is the largest $k$ such that there exists a complete $k$-coloring (not necessarily proper) of $\mathbf{S}$. 
When the $k$-coloring is required to be complete and proper, the parameter is called the \emph{achromatic index} 
and it is denoted by $\alpha'(\mathbf{S})$. Therefore, we have that 
\begin{equation}\label{iquality:chi_alpha_psi}
\chi'(\mathbf{S}) \leq \alpha'(\mathbf{S}) \leq \psi'(\mathbf{S}).
\end{equation}
Several authors studied the pseudoachromatic index, 
%published results related to this types of colorations, 
see \cite{MR3249588,AMRS17,MR2778722,AR17,MR543176,MR0256930,MR0272662,MR989126}. Moreover, in \cite{AKRV,MR695809,MR1178507} the achromatic indices of some block designs were also estimated.

The objective of this paper is to study the pseudoachromatic and achromatic indices of finite affine spaces. 
Let $V_{n}$ be an $n$-dimensional vector space over the finite field of $q$ elements $\mathrm{GF}(q).$
The \emph{$n$-dimensional Desarguesian finite affine space} $\mathrm{AG}(n,q)$ is the geometry whose $k$-dimensional 
affine subspaces for $k=0,1,\dots ,n-1$ are the translates of the $k$-dimensional linear subspaces of $V_{n}.$ 
Thus any $k$-dimensional affine subspace can be given as:
$$\Sigma _k=L_k+\mathbf{v} = \{ \mathbf{x}+\mathbf{v}\, \colon \, \mathbf{x}\in  L_k\} $$
where $L_k$ is a $k$-dimensional linear subspace and $\mathbf{x}$ is a fixed element of $V_n.$
Subspaces of dimensions $0,1,2$ and $n-1$ are called \emph{points, lines, planes} and \emph{hyperplanes}, respectively.
Two affine subspaces $\Sigma _i$ and $\Sigma _j$ are said to be \emph{parallel}, if there exists 
$\mathbf{v}\in V_{n}$ for which $\Sigma _i + \mathbf{v} \subseteq \Sigma _j$ or $\Sigma _j + \mathbf{v} \subseteq \Sigma _i.$ 
In particular, two lines are parallel if and only if they are translates of the same $1$-dimensional linear subspace of
$V_n.$ Affine spaces are closely connected to projective spaces. Let $V_{n+1}$ be an $(n+1)$-dimensional vector space over $\mathrm{GF}(q).$
The \emph{$n$-dimensional Desarguesian finite projective space}, $\mathrm{PG}(n,q),$ is the geometry
whose $k$-dimensional subspaces for $k=0,1,\ldots ,n$ are the $(k+1)$-dimensional subspaces of $V_{n+1}.$
Let $\mathcal{H}_{\infty }$ be a fixed hyperplane in $\mathrm{PG}(n,q).$ If we delete all points of $\mathcal{H}_{\infty }$
from $\mathrm{PG}(n,q)$ then we obtian $\mathrm{AG}(n,q).$ The deleted points can be identified with the parallel classes of 
lines in $\mathrm{AG}(n,q).$ These points are called \emph{points at infinity} and we often consider the affine space as 
$\mathrm{AG}(n,q)=\mathrm{PG}(n,q)\setminus \mathcal{H}_{\infty }.$
For the detailed description of these spaces we refer to \cite{jwph3}. 

\smallskip
The results are organized as follows. In Section \ref{section2} the following upper bound is proved:
\begin{theorem}\label{thm:uno}
Let $v=q^n$ denote the number of points of the finite affine space $\mathrm{AG}(n,q).$ Then
\[\psi'(\mathrm{AG}(n,q))\leq\frac{\sqrt{v}(v-1)}{q-1}-\mathrm{\Theta }(q\sqrt{v}/2).\]
\end{theorem}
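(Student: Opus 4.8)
The plan is to bound the pseudoachromatic index by a standard double-counting / edge-budget argument. Fix a complete $k$-coloring $\varsigma$ of $\mathrm{AG}(n,q)$. The total number of lines of $\mathrm{AG}(n,q)$ is
\[
b = \frac{q^{n-1}(q^n-1)}{q-1} = \frac{\sqrt{v}\,(v-1)}{q-1},
\]
and each line meets exactly $r-1$ other lines through each of its $q$ points, where $r = (q^n-1)/(q-1)$ is the number of lines through a point (equivalently, $\chi'(\mathrm{AG}(n,q))$ by \eqref{eq:igualdad}). First I would record that since the coloring is complete, the number of \emph{pairs} of color classes is $\binom{k}{2}$, and each such pair must be ``witnessed'' by an intersecting pair of lines in distinct classes; the number of intersecting (unordered) pairs of lines in $\mathrm{AG}(n,q)$ is at most $b\cdot q\cdot(r-1)/2$, but this crude bound only gives $k = O(v^{3/2}/\sqrt{q})$, which is too weak. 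The point of the theorem is the subtraction of the $\Theta(q\sqrt v/2)$ term, so a finer argument is needed.

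The refinement I would use is to exploit that a \emph{single} line can serve as a witness for only boundedly many color pairs relative to its own class. Concretely, for a color class $C_i$ let $n_i = |C_i|$ and let $d_i$ be the number of \emph{distinct} colors $j\ne i$ that are adjacent to $C_i$ (i.e. some line of $C_i$ meets some line of $C_j$). Completeness forces $d_i = k-1$ for every $i$. On the other hand, the lines of $C_i$ collectively meet at most $\sum_{\ell\in C_i} q(r-1) = n_i\, q(r-1)$ lines counted with multiplicity, so the number of colors they can reach is at most $n_i q(r-1)$; hence $k - 1 \le n_i q (r-1)$ for all $i$, giving $n_i \ge (k-1)/(q(r-1))$. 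Summing over $i$ and using $\sum_i n_i = b$ yields $b \ge k(k-1)/(q(r-1))$, i.e. $k \le \sqrt{b\,q(r-1)} + O(1)$. This still has the wrong shape, so the genuinely useful observation must instead bound how many distinct colors a class can reach in terms of the \emph{geometry}: the union of $n_i$ lines covers at most $q\, n_i$ points (with equality only if they are pairwise disjoint), and through those points pass at most $q n_i (r-1)$ lines, but many of these coincide — in a complete coloring with $k$ close to the maximum, most classes are small (size $O(q^{0.5n-1})$-ish), and two lines of a small class typically intersect, forcing a Fisher-type inequality on the point set they cover. The main obstacle, and the heart of the proof, is to make this ``small classes cannot be spread out'' intuition quantitative: I expect one shows that if $k$ exceeds $b/ (\text{average class size})$ then a positive proportion of classes have size exactly $1$ or $2$, a singleton class $\{\ell\}$ reaches only $q(r-1)$ colors, and a careful count of how the remaining classes must ``fill in'' the missing adjacencies loses exactly a term of order $q(r-1)\sim q\sqrt v/(q-1)\cdot q$, matching $\Theta(q\sqrt v /2)$ after optimization.

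Assembling the pieces: I would let $t$ be the number of color classes of size $1$. Each singleton $\{\ell\}$ must be adjacent to all other $k-1$ colors, but $\ell$ meets only $q(r-1)$ lines, so if two singletons $\{\ell\},\{\ell'\}$ both exist they must in particular be adjacent, and more importantly $k - 1 \le q(r-1)$ would be forced if there were a singleton — this already caps $k$, but at the wrong order unless combined with the count on non-singleton classes. The correct bookkeeping is: $b = \sum_i n_i \ge t + 2(k - t) = 2k - t$, while completeness applied to any one singleton (if $t\ge 1$) gives $k \le q(r-1) + 1$; if $t = 0$ then every class has $n_i\ge 2$ so $b \ge 2k$ and again $k\le b/2$. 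Comparing $k \le b/2$ against the target $b - \Theta(q\sqrt v/2)$ shows these naive bounds are not yet enough, so the real argument interleaves them: partition classes by size, show classes of size $\le s$ number at most roughly $b/2 - (\text{something})$ forced to be large, and optimize $s$. I anticipate the optimization is where the explicit constant $q\sqrt v/2 = q^{1+n/2}/2$ emerges, and getting the error term to be $\Theta$ rather than $O$ (i.e. a matching lower-order lower bound on the loss) will require exhibiting that the extremal configuration — near-resolvable, with one near-parallel-class left uncolored-by-distinct-colors — actually incurs that loss, which is the most delicate calculation.
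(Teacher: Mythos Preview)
Your proposal does not reach a proof; it circles the target without landing. The approaches you try --- counting intersecting pairs globally, then bounding singleton classes, then partitioning by class size --- each give bounds that you yourself note are too weak ($k\le b/2$, or $k\le q(r-1)+1$), and the closing paragraphs are a description of a hope (``the real argument interleaves them'', ``the most delicate calculation'') rather than an argument.

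The two concrete ingredients you are missing are exactly the ones the paper uses. First, a quantitative version of your remark that ``many of these coincide'': for \emph{any} set $\mathcal{L}$ of $s$ lines in $\mathrm{AG}(n,q)$ with $n>2$, the number of lines meeting $\mathcal{L}$ is at most
\[
q^2\Bigl(s\,\tfrac{q^{n-1}-1}{q-1}-(s-1)\Bigr),
\]
because any two lines (intersecting or skew) have at least $q^2$ common transversals, so inclusion--exclusion on the $s$ sets of neighbours saves $(s-1)q^2$. You gesture at this overlap but never pin down the $q^2$. Second, rather than summing over all classes or isolating singletons, apply this bound to the \emph{smallest} color class, whose size is at most $s=b/k$. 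Completeness forces the other $k-1$ classes each to contribute a line meeting this smallest class, so
\[
k-1 \;\le\; q^2\Bigl(\tfrac{b}{k}\cdot\tfrac{q^{n-1}-1}{q-1}-\tfrac{b}{k}+1\Bigr),
\]
which after multiplying through by $k$ is a quadratic in $k$; solving it and simplifying the radical gives
\[
k \;\le\; q^{n/2}\,\tfrac{q^n-1}{q-1}-q^{n/2}\,\tfrac{q+1}{2}+\tfrac{q^2+1}{2},
\]
which is the claimed $\tfrac{\sqrt v(v-1)}{q-1}-\Theta(q\sqrt v/2)$. Your singleton bound $k-1\le q(r-1)$ is just the $s=1$ case of this and cannot by itself produce the second-order term; the savings comes precisely from the $-(s-1)q^2$ overlap applied with $s\approx b/k$.
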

In Section \ref{section3} lower bounds for pseudoachromatic and achromatic indices of $\mathrm{AG}(n,q)$ are presented.
The main results are the following. 
\begin{theorem}\label{thm:dos}
Let $v=q^n$ denote the number of points of $\mathrm{AG}(n,q)$. 
\begin{itemize}
\item If $n$ is even:
\[ \frac{1}{2}\cdot\frac{\sqrt{v}(v-1)}{q-1}-\mathrm{\Theta }(\sqrt{v}/2) \leq \psi'(\mathrm{AG}(n,q)).\]
\item If $n$ is odd: 
\[ \frac{1}{\sqrt{q}}\cdot\frac{\sqrt{v}(v-1)}{q-1}-\mathrm{\Theta }(v\sqrt{v/q^5})\leq  \psi'(\mathrm{AG}(n,q)).\]
\end{itemize}
\end{theorem}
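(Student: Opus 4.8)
The plan is to work throughout with the following reformulation. To a line-coloring $\varsigma$ with classes $C_1,\dots ,C_k$ associate the \emph{traces} $S_i=\bigcup_{\ell\in C_i}\ell$, the set of points met by $C_i$. Since two lines intersect precisely when they share a point, $\varsigma$ is complete if and only if the traces $S_1,\dots ,S_k$ pairwise intersect. Hence it suffices to exhibit, for each parity of $n$, a partition of the line set of $\mathrm{AG}(n,q)$ into as many parts as possible with pairwise intersecting traces; rounding and a bounded number of exceptional lines will be absorbed by the $\Theta$-terms.

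For even $n=2t$ put $Q=q^{t}$ and invoke the Andr\'e/Bruck--Bose description of a Desarguesian $t$-spread: there is a family $\mathcal L$ of $Q^{2}+Q$ affine $t$-flats, each isomorphic to $\mathrm{AG}(t,q)$, with $\bigl(\text{points of }\mathrm{AG}(2t,q),\ \mathcal L\bigr)\cong\mathrm{AG}(2,Q)$, with two non-parallel members meeting in exactly one point, and such that every affine line of $\mathrm{AG}(2t,q)$ lies in a \emph{unique} member of $\mathcal L$. First I would build a complete coloring of $\mathrm{AG}(2,Q)$ whose classes are pairs $\{L,L'\}$ of non-parallel lines: the complete $(Q+1)$-partite graph on the lines (parts $=$ the $Q+1$ parallel classes, each of size $Q$) has a perfect matching since no part exceeds half the vertices, and any two of the resulting pairs are adjacent, for if every line of one were parallel to every line of the other then the second pair would be monochromatic. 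This gives $\tfrac12 Q(Q+1)$ classes. Now \emph{lift and refine}: each $L\cong\mathrm{AG}(t,q)$ partitions its affine lines into $m=(q^{t}-1)/(q-1)$ parallel classes $\Pi^{L}_{1},\dots ,\Pi^{L}_{m}$, each covering all of $L$; for a pair $\{L,L'\}$ set $D_{j}=\Pi^{L}_{j}\cup\Pi^{L'}_{j}$, so that $D_{j}$ has trace $L\cup L'$. Because every affine line of $\mathrm{AG}(2t,q)$ lies in a unique member of $\mathcal L$, the $D_{j}$'s form a line-coloring, and it is complete: two refined classes from the same pair share the trace $L\cup L'$, while for pairs $\{L,L'\},\{M,M'\}$ completeness of the planar coloring gives (say) $L\cap M=\{P\}$, and $P$ lies in both traces. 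The number of colors is $\tfrac12 Q(Q+1)\,m=\tfrac12\cdot\frac{\sqrt v(v-1)}{q-1}$, which yields the even statement (in fact with no loss, the $\Theta(\sqrt v/2)$ being slack).

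For odd $n=2t+1$ I would write $\mathrm{AG}(2t+1,q)=\mathrm{AG}(1,q)\times\mathrm{AG}(2t,q)$, fix $\mathcal L$ on the second factor, and use the \emph{cylinders} $Z_{L}=\mathrm{AG}(1,q)\times L$ ($L\in\mathcal L$), each isomorphic to $\mathrm{AG}(t+1,q)$; two non-parallel cylinders meet in a ``vertical'' line, and every non-vertical line of $\mathrm{AG}(2t+1,q)$ lies in a unique cylinder (the $t=1$ case, where the spread of $\mathrm{AG}(2,q)$ is trivial, is included). Running the same lift-and-refine with cylinders in place of $t$-flats, and adjoining the $O(v/q)$ vertical lines to classes already covering them, produces a complete coloring with $\tfrac12 Q(Q+1)\cdot\frac{q^{t+1}-q}{q-1}$ colors, essentially $\tfrac{1}{2\sqrt q}\cdot\frac{\sqrt v(v-1)}{q-1}$. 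To reach the claimed $\tfrac{1}{\sqrt q}\cdot\frac{\sqrt v(v-1)}{q-1}$ one must recover the factor two lost to pairing, by requiring a class only to meet \emph{every axis line} of a cylinder (which costs $q^{t-1}$ lines rather than $q^{t}$) and then repairing the conflicts that appear between classes lying in parallel cylinders of one spread direction. This last step is, I expect, the main obstacle: one has to simultaneously keep the traces inside each cylinder pairwise intersecting, force classes in parallel cylinders to share a covered point, and control divisibility, all while sacrificing only $\Theta(v\sqrt{v/q^{5}})$ colors, and this bookkeeping is where the technical weight of the odd case lies.
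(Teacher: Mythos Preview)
Your even-dimensional argument is correct and is, at bottom, the same construction as the paper's Theorem~3.3: both take a $(k-1)$-spread of $\mathcal H_\infty$, view the resulting affine $k$-flats as the lines of $\mathrm{AG}(2,q^k)$, and build each color class as the union of one parallel pencil inside each of two non-parallel $k$-flats. The paper pairs \emph{points} of $\mathcal H_\infty$ (hence a parity-of-$q$ case split), whereas your perfect-matching-on-$\mathrm{AG}(2,Q)$ packaging pairs the $k$-flats directly; this is a cleaner organization and even sidesteps the paper's small loss when $q$ is even, but the underlying geometry is identical.

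The odd-dimensional case, however, is genuinely incomplete. Your cylinder construction yields only about $\tfrac{1}{2\sqrt q}\cdot\tfrac{\sqrt v(v-1)}{q-1}$ colors, and you acknowledge that recovering the missing factor~$2$ is ``the main obstacle''. The sketch you offer---relaxing the trace condition to ``meet every axis line'' and then repairing conflicts among parallel cylinders---does not lead anywhere concrete: classes coming from parallel cylinders have disjoint projections to $\mathrm{AG}(2t,q)$, so without a full vertical fiber in their traces they simply will not meet, and you give no mechanism to force this while keeping the color count up.

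The paper's device for the odd case (Theorem~3.4) is a different idea that you are missing entirely. Rather than pair equal-dimensional objects, it pairs subspaces of \emph{complementary} dimensions: each color class is the union of one parallel pencil inside a $(k{+}1)$-dimensional affine flat and one parallel pencil inside a $k$-dimensional affine flat. By Proposition~3.1 such a pair always meets in a single affine point provided their parts at infinity are disjoint, and this is arranged by the ``good partition'' of $\mathrm{PG}(2k,q)$ (Proposition~3.2): the points of $\mathcal H_\infty$ are split into a large set $\mathcal A$ carrying $k$-dimensional subspaces and a $q$-times-smaller set $\mathcal B$ carrying $(k{-}1)$-dimensional subspaces, with $S(A)\cap S(B)=\emptyset$ for all $A\in\mathcal A$, $B\in\mathcal B$. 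Because the $\mathcal B$-side has $q$ times as many affine translates as the $\mathcal A$-side, each $B$ can be reused across $q$ different $A$'s, and no factor~$2$ is lost to symmetric pairing. This asymmetric dimension-count, not a refinement of your cylinder picture, is what produces the $q^{k+2}\frac{q^{2k}-1}{q^2-1}+1$ bound.
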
 

\begin{theorem}\label{thm:tres}
Let $v=q^n$ denote the number of points of $\mathrm{AG}(n,q).$ If $n$ is even:
\[ \frac{1}{3}\cdot\frac{\sqrt{v}(v-1)}{q-1}+\mathrm{\Theta }(v/q) \leq \alpha'(\mathrm{AG}(n,q)).\]
\end{theorem}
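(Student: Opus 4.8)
The plan is to construct an explicit complete and proper line-coloring of $\mathrm{AG}(n,q)$ for even $n=2m$ achieving roughly $\frac{1}{3}\cdot\frac{\sqrt v(v-1)}{q-1}$ colors, and then to invoke \eqref{iquality:chi_alpha_psi} backwards in the sense that any proper complete coloring witnesses the lower bound for $\alpha'$. The natural starting point is the construction already used (or to be used) for Theorem~\ref{thm:dos} in the even case: view $\mathrm{AG}(2m,q)$ as fibered into $q^m$ parallel $m$-dimensional affine subspaces via a fixed decomposition $V_{2m}=U\oplus W$ with $\dim U=\dim W=m$, or equivalently pick a spread of $\mathrm{PG}(2m-1,q)$ at infinity by $(m-1)$-dimensional subspaces. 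Each $m$-flat $\Sigma$ is an $\mathrm{AG}(m,q)$, and inside it we can take a \emph{parallel class} of lines (a parallel class has $q^{m-1}$ lines, all pairwise disjoint, covering $\Sigma$) as one potential colour class. The number of $m$-flats in the fibration is $q^m$ and each contributes $\frac{q^m-1}{q-1}$ parallel classes, but we only get to use disjoint colour classes, so a careful accounting is needed.

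Here is the core idea in more detail. First I would fix a "Baer-type" or "subspace" substructure: choose inside $\mathrm{PG}(2m,q)$ the hyperplane at infinity $\mathcal H_\infty=\mathrm{PG}(2m-1,q)$ and a spread $\mathcal S$ of $\mathcal H_\infty$ into $(m-1)$-flats; each spread element is the infinity-hyperplane of a system of parallel $m$-flats of $\mathrm{AG}(2m,q)$. For a fixed spread element $S$, the $q^m$ parallel $m$-flats it defines partition the point set, and each is isomorphic to $\mathrm{AG}(m,q)$. Next, inside one such $m$-flat I would colour the lines by taking each \emph{line} to be its own colour where possible, but since that overshoots the global count we instead group: a colour class is a union of a small bounded number (this is where the $1/3$ comes from) of pairwise-disjoint lines, chosen so that across different $m$-flats the classes remain pairwise "touching" — this is automatic because two lines in different parallel $m$-flats of the same spread system do not meet, so completeness has to be arranged using lines with \emph{different} directions at infinity. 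The key trick: supplement the intra-flat colour classes by using the lines of $\mathrm{AG}(2m,q)$ whose direction does not lie in a single spread element — these "transversal" lines each meet many of the $m$-flats and can be used to glue all colour classes together, guaranteeing completeness, while contributing only $\mathrm\Theta(v/q)$ to the count.

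Then the plan is to do the arithmetic. The number of lines of $\mathrm{AG}(2m,q)$ is $q^{2m-1}\cdot\frac{q^{2m}-1}{q-1}$, and per $m$-flat we have $q^{m-1}\cdot\frac{q^m-1}{q-1}$ lines. If colour classes inside each $m$-flat consist of $3$ pairwise-disjoint lines (one can realise disjoint triples because a parallel class has $q^{m-1}\ge 3$ lines when $q^{m-1}\ge 3$, and one can combine lines from up to three distinct parallel classes so long as they stay pairwise disjoint — three pairwise non-parallel lines in general position in $\mathrm{AG}(m,q)$ are pairwise disjoint once $m\ge 2$), we obtain roughly $\frac13$ of the lines in each $m$-flat as colours, totalling about $\frac13\,q^m\cdot q^{m-1}\cdot\frac{q^m-1}{q-1}=\frac13\cdot\frac{\sqrt v(v-1)}{q-1}$, matching the claimed leading term; the transversal lines then add the $+\mathrm\Theta(v/q)$ correction and, more importantly, force completeness. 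I would verify properness class-by-class (disjointness within each triple, and disjointness of the transversal lines used) and verify completeness by checking that every pair of colour classes contains an incident pair — classes in the same $m$-flat are handled inside $\mathrm{AG}(m,q)$, classes in different $m$-flats are handled by a common transversal line meeting both.

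The hard part will be simultaneously guaranteeing properness (colour classes must be genuinely parallel — pairwise-disjoint line sets, not merely "mostly disjoint") and completeness across the $q^m$ different $m$-flats, since intra-flat incidences cannot connect two different flats. Making the transversal "glue" work with only $\mathrm\Theta(v/q)$ extra colours — so that the leading $\frac13$ term is not degraded — requires a careful choice of which transversal directions to use and an argument that a bounded (in $q$) family of transversal lines already pairwise-connects all $\mathrm\Theta(q^m\cdot q^{m-1}\cdot q^m/(q-1))$ intra-flat classes; this combinatorial covering argument, together with the explicit partition of each $\mathrm{AG}(m,q)$'s line set into disjoint triples leaving a negligible remainder (handled via the exact divisibility of $q^{m-1}\frac{q^m-1}{q-1}$ by $3$ or absorbed into the error term), is the technical crux. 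Everything else reduces to the line-count identities and \eqref{iquality:chi_alpha_psi}.
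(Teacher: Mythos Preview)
Your proposal has a genuine gap, and it is visible already in the arithmetic. You claim
\[
\frac{1}{3}\,q^m\cdot q^{m-1}\cdot\frac{q^m-1}{q-1}=\frac{1}{3}\cdot\frac{\sqrt v(v-1)}{q-1},
\]
but the left-hand side equals $\tfrac{1}{3}(q^{3m-1}-q^{2m-1})/(q-1)$ while the right-hand side equals $\tfrac{1}{3}(q^{3m}-q^m)/(q-1)$; your count is too small by a factor of roughly $q$. The reason is structural: a \emph{single} spread element $S$ and its fibration into $q^m$ parallel $m$-flats account only for the lines whose point at infinity lies in $S$, i.e.\ for a $\tfrac{1}{q^m+1}$ fraction of all lines. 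To reach the target order of magnitude you must use \emph{all} $q^m+1$ spread elements, and once you do, your colour classes (triples of lines sitting inside one $m$-flat of one fibration) run into a fatal completeness problem: two classes lying in distinct parallel $m$-flats of the \emph{same} fibration can never meet, and there are $\binom{q^m}{2}$ such flat-pairs for each of the $q^m+1$ spread elements. No $\Theta(v/q)=\Theta(q^{2m-1})$ family of ``transversal'' colours can repair this, since you would need each transversal colour to be incident with essentially all of the $\Theta(q^{3m-1})$ intra-flat classes while remaining pairwise disjoint from the others.

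The paper's construction avoids both issues by grouping in the opposite direction: it partitions the \emph{spread elements} (not the lines) into triples $(e,f,g)$, and each colour class $C^{i,j}_{e,f,g}$ consists of one parallel class of lines in an $e$-flat, one in an $f$-flat, and one in a $g$-flat, with two exceptional lines through a common point $M_i$ removed to keep the class proper. Completeness then follows from Proposition~\ref{dim}: any two classes contain lines lying in $k$-flats coming from distinct spread elements, and such flats always meet in a single affine point. The factor $1/3$ arises from the tripling of spread elements, not from packing three lines at a time; this is the key idea your sketch is missing.
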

\noindent
Note that when $n$ is even Theorems \ref{thm:uno} and \ref{thm:dos} show that $\psi'(\mathrm{AG}(n,q))$ grows asymptotically 
as $\mathrm{\Theta }(v^{1.5}/q),$ while Theorems \ref{thm:dos} and \ref{thm:tres} show that $\alpha '(\mathrm{AG}(n,q))$ grows asymptotically 
as $\mathrm{\Theta }(v^{1.5}/q).$

Finally, in Section \ref{section4} we determine the exact values of pseudoachromatic and
achromatic indices of arbitrary (not necessarily Desarguesian) finite affine planes and we improve the previous lower bounds in dimension $3.$

%%%%%%%%%%%%%%%%%%%%%%%%%%%%%%%%%%%%%%%%%%%%%%%%%%%%%%%%%%%%%%%%%%%%%%%%%%%%%%%%%%%%%%%%%%%%%%%%%%%%%%%%%%%

\section{Upper bounds}\label{section2}

In this section upper bounds for the pseudoachromatic index of $\mathrm{AG}(n,q)$ are presented when $n>2.$ 
The following lemma is pivotal in the proof.

\begin{lemma}
\label{intersect}
Let $n>2$ be an integer and $\cal L$ be a set of $s$ lines in $\mathrm{AG}(n,q).$ Then the number of lines 
in $\mathrm{AG}(n,q)$ intersecting at least one element of $\cal L$ is at most
$$q^2\left( s\frac{q^{n-1}-1}{q-1}-(s-1)\right) .$$ 
\end{lemma}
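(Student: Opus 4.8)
The plan is to count, for a single line $\ell$, how many lines of $\mathrm{AG}(n,q)$ meet it, and then to sum over the $s$ lines of $\mathcal L$ while correcting for overcounting. First I would fix a line $\ell$ and a point $P\in\ell$. Every line through $P$ lies in the quotient geometry at $P$, which is a $\mathrm{PG}(n-1,q)$; hence the number of lines through $P$ is $\frac{q^{n-1}-1}{q-1}$, of which $\ell$ itself is one. Since $\ell$ has $q$ points and two of these line-pencils share only $\ell$, the number of lines meeting $\ell$ (including $\ell$) is exactly
\[
1+q\left(\frac{q^{n-1}-1}{q-1}-1\right).
\]
Call this quantity $N_1$. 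A line not in $\mathcal L$ that meets some member of $\mathcal L$ is counted here once for each member of $\mathcal L$ it meets, so a crude union bound gives at most $sN_1$ lines; but this is larger than the claimed bound, so the subtraction of $(s-1)$ inside the parentheses (i.e.\ a saving of roughly $q^2(s-1)$) must come from a smarter argument.

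The key idea for the saving is to build $\mathcal L$ up one line at a time and observe that consecutive lines interact. Order the lines of $\mathcal L$ as $\ell_1,\dots,\ell_s$ so that the partial unions stay ``connected'' when possible; more robustly, I would argue as follows. When we add $\ell_{i}$ to $\ell_1,\dots,\ell_{i-1}$, the new lines it contributes are those meeting $\ell_i$ but none of $\ell_1,\dots,\ell_{i-1}$. If $\ell_i$ meets one of the earlier lines, say at a point $Q$, then the entire pencil of lines through $Q$ — that is $\frac{q^{n-1}-1}{q-1}$ lines — has already been counted, so $\ell_i$ contributes at most $N_1-\frac{q^{n-1}-1}{q-1}$ genuinely new lines. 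If $\ell_i$ is disjoint from all earlier lines, I instead exploit a point at infinity or the plane spanned by $\ell_i$ and an earlier line to absorb the discrepancy. Summing, the first line gives $N_1$ and each subsequent line gives at most $N_1-\frac{q^{n-1}-1}{q-1}$, but one checks this still needs tightening to reach the exact constant $q^2$.

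To land precisely on $q^2\!\left(s\frac{q^{n-1}-1}{q-1}-(s-1)\right)$, I would recount more carefully using planes: a line $m$ meeting $\ell_i$ lies in a unique plane through $\ell_i$ containing $m$, and each plane through $\ell_i$ contains $q^2$ lines meeting $\ell_i$ (the $q$ lines through each of the $q$ points of $\ell_i$, minus overlaps, in an affine plane of order $q$). The number of planes through a fixed line in $\mathrm{AG}(n,q)$ is $\frac{q^{n-1}-1}{q-1}$. So the count of lines meeting $\ell_i$ is at most $q^2\cdot\frac{q^{n-1}-1}{q-1}$, and when $\ell_i$ shares a plane with an earlier line (which happens as soon as it shares a point, and can be arranged to happen for connectivity reasons when building up $\mathcal L$), that shared plane's $q^2$ lines are already counted, giving the $-q^2$ saving per additional line. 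The main obstacle is the bookkeeping in the case where a newly added line is skew to everything so far: one must show the union can always be traversed so that this case costs nothing, or that a disjoint line still overlaps the running total in at least $q^2$ lines through a shared plane at infinity. Handling that case cleanly — essentially a connectivity/ordering argument on the ``plane-intersection graph'' of $\mathcal L$, or a direct double-count of (line, plane) incidences — is where the real work lies; the rest is the routine arithmetic of subspace counts in $\mathrm{AG}(n,q)$.
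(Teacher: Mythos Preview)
Your proposal has an arithmetic slip that matters, and one gap that you flag but do not close; both are resolved by a single observation that makes the whole argument two lines long.

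The slip: the number of lines of $\mathrm{AG}(n,q)$ through a fixed point is $\dfrac{q^{n}-1}{q-1}$, not $\dfrac{q^{n-1}-1}{q-1}$ (the quotient at a point is indeed $\mathrm{PG}(n-1,q)$, but that space has $\frac{q^{n}-1}{q-1}$ points). With the correct count, the pencil saving in your ``intersecting'' case is already $\frac{q^{n}-1}{q-1}-2>q^{2}$ for $n>2$, so no tightening is needed there; your feeling that the constant was off came entirely from this error.

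The gap: the skew case, which you leave open, is in fact the easiest. If $\ell$ and $\ell'$ are disjoint then the $q^{2}$ lines $PP'$ with $P\in\ell$, $P'\in\ell'$ are pairwise distinct (a coincidence would put two points of $\ell$ or two points of $\ell'$ on a common line, forcing $\ell\cap\ell'\neq\emptyset$), and each of them meets both $\ell$ and $\ell'$. Hence \emph{any} two distinct lines of $\mathrm{AG}(n,q)$ have at least $q^{2}$ common neighbours. No plane-sharing, no ordering, no connectivity is needed. This is exactly how the paper proceeds: with $A_i$ the set of lines meeting $\ell_i$, one has $|A_i|=q^{2}\dfrac{q^{n-1}-1}{q-1}$ and $|A_i\cap A_j|\ge q^{2}$ for all $i\neq j$, so building the union one index at a time gives
\[
\Bigl|\bigcup_{i=1}^{s}A_i\Bigr|\le |A_1|+\sum_{i=2}^{s}|A_i\setminus A_{i-1}|\le s\cdot q^{2}\frac{q^{n-1}-1}{q-1}-(s-1)q^{2}.
\]
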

\begin{proof}
Recall that in $\mathrm{AG}(n,q)$ there exists a unique line joining any pair of points, and each line has exactly $q$ points. 
Hence there are $\frac{q^n-1}{q-1}$ lines through each point. Thus 
there are $$q\left( \frac{q^n-1}{q-1}-1\right) =q^2\left( \frac{q^{n-1}-1}{q-1}\right)$$ 
lines intersecting any fixed line.  
We claim that if $\ell _1$ and $\ell _2$ are different lines then the number of lines
intersecting both $\ell _1$ and $\ell _2$ is at least $q^2.$ If $\ell _1\cap \ell _2=\emptyset $ then the 
$q^2$ lines joining a point of $\ell _1$ and a point of $\ell _2$ intersect both $\ell _1$ and $\ell _2,$ 
while, if $\ell _1\cap \ell _2=\{ P\} $ then the other $(q^{n-1}+q^{n-2}+\dots +1)-2>q^2$ lines through $P$ intersect both $\ell _1$ and $\ell _2.$ 
Consequently, the number of lines
intersecting at least one element of $\cal L$ is at most
$$sq^2\left( \frac{q^{n-1}-1}{q-1}\right) -(s-1)q^2.$$

Notice that the previous inequality is tight, since if $\cal L$ consists of $s$ parallel 
lines in a plane then there are exactly $q^2\left( s\frac{q^{n-1}-1}{q-1}-(s-1)\right) $ 
lines intersecting at least one element of $\cal L$. 
\end{proof}

\begin{lemma}
Let $n>2$ be an integer.
Then the colorings of the finite affine space $\mathrm{AG}(n,q)$ satisfy the inequality 
\begin{equation}\label{upperest}
\psi' (\mathrm{AG}(n,q))\leq \frac{\sqrt{4q^{n}(q^n-1)(q^{n}-q^2)+(q^2+1)^2(q-1)^2}}{2(q-1)} +\frac{q^2+1}{2}.
\end{equation}
\end{lemma}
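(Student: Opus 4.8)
The plan is a single global double count. Fix a complete colouring of $\mathrm{AG}(n,q)$ attaining the maximum, write $k:=\psi'(\mathrm{AG}(n,q))$, and let $C_1,\dots,C_k$ be its colour classes with $s_i:=|C_i|\ge 1$. Since every line receives exactly one colour, $\sum_{i=1}^{k}s_i=N$, where $N=q^{n-1}\cdot\frac{q^{n}-1}{q-1}$ is the total number of lines of $\mathrm{AG}(n,q)$ (each of the $\binom{q^n}{2}$ point-pairs lies on a unique line, which carries $\binom{q}{2}$ of them).

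Next, fix an index $i$. Applying Lemma \ref{intersect} with $\mathcal L=C_i$ and $s=s_i$ (legitimate as $n>2$), the number of lines meeting at least one member of $C_i$ is at most $q^2\!\left(s_i\frac{q^{n-1}-1}{q-1}-(s_i-1)\right)$. On the other hand, completeness means that for every $j\ne i$ some line of $C_j$ meets a line of $C_i$; these witness lines lie in pairwise distinct classes, hence are pairwise distinct, so at least $k-1$ lines meet $C_i$. Comparing the two estimates and using $\frac{q^{n-1}-1}{q-1}-1=\frac{q^{n-1}-q}{q-1}$ gives, after rearranging,
\[
s_i\ \ge\ \frac{(k-1-q^2)(q-1)}{q^2\,(q^{n-1}-q)} .
\]
(When the right-hand side is non-positive the inequality is vacuous but still true since $s_i\ge 1$, so no case split is needed; note $q^{n-1}-q>0$ because $n>2$.)

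Summing over $i=1,\dots,k$ and substituting $\sum_i s_i=N$ yields the quadratic inequality
\[
k^2-(q^2+1)k-\frac{q^2(q^{n-1}-q)}{q-1}\,N\ \le\ 0 .
\]
A routine simplification gives $\frac{q^2(q^{n-1}-q)}{q-1}\,N=\frac{q^{n}(q^{n}-1)(q^{n}-q^2)}{(q-1)^2}$, and solving the inequality for $k$ — taking the larger root, which is the relevant one because the coefficient of $k$ is negative — and then extracting the factor $(q-1)^{-2}$ from the discriminant produces exactly the bound (\ref{upperest}).

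There is no genuine obstacle beyond bookkeeping; the two points deserving attention are the distinctness of the $k-1$ lines forced by completeness (immediate, as they lie in different classes) and the degenerate range $k\le q^2+1$ in which the per-class estimate is empty — but there (\ref{upperest}) holds trivially, since its right-hand side is at least $\frac{(q^2+1)(q-1)}{2(q-1)}+\frac{q^2+1}{2}=q^2+1$. The tightness example at the end of Lemma \ref{intersect} indicates that the intersection estimate cannot be sharpened in general, so any improvement of (\ref{upperest}) would have to exploit more structure of a complete colouring.
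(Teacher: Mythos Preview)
Your proof is correct and follows essentially the same route as the paper: both apply Lemma~\ref{intersect} to bound the number of lines meeting a colour class by $q^2\bigl(s_i\frac{q^{n-1}-1}{q-1}-(s_i-1)\bigr)$, compare with the $k-1$ lines forced by completeness, and arrive at the identical quadratic $k^2-(q^2+1)k-\frac{q^n(q^n-1)(q^n-q^2)}{(q-1)^2}\le 0$. The only cosmetic difference is that the paper picks the \emph{smallest} class and uses $s\le N/k$, whereas you apply the estimate to every class and sum using $\sum_i s_i=N$; these two moves are equivalent and yield the same inequality.
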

\begin{proof}
Consider a complete coloring which contains $\psi' (\mathrm{AG}(n,q))$ color
classes. Then the number of lines in the smallest color class is at most 
$$s=\frac{q^{n-1}(q^n-1)}{(q-1)\psi' (\mathrm{AG}(n,q))}.$$ 
Each of the other $\psi' (\mathrm{AG}(n,q))-1$
color classes must contain at least one line which intersects a line of the
smallest color class. Hence, by Lemma \ref{intersect}, we obtain 
$$\psi' (\mathrm{AG}(n,q))-1\leq q^2\left( s\frac{q^{n-1}-1}{q-1}-(s-1)\right) .$$
Multiplying it by $\psi' (\mathrm{AG}(n,q)),$ we get a quadratic inequality on 
$\psi' (\mathrm{AG}(n,q)),$ whose solution yields the statement of the theorem.
\end{proof}

We can now prove our first main theorem.

\begin{proof}[Proof of Theorem \ref{thm:uno}]
In the case $n>2$ elementary calculation yields
$$
\begin{aligned}
4q^{n}(q^n-1)(q^{n}-q^2)+(q^2+1)^2(q-1)^2  = & \left( 2q^{\frac{n}{2}}(q^{n}-1)
-q^{\frac{n}{2}}(q^2-1)\right) ^2 \\
 & - q^n(q^2-1)^2 +(q^2+1)^2(q-1)^2 \\
 < & \left( 2q^{\frac{n}{2}}(q^{n}-1)
-q^{\frac{n}{2}}(q^2-1)\right) ^2,
\end{aligned}
$$
because $n>2$ implies that $q^n(q^2-1)^2 >(q^2+1)^2(q-1)^2.$
Thus we can estimate the radical expression in Equation (\ref{upperest}) and we obtain 
\[\psi' (\mathrm{AG}(n,q))\leq q^{\frac{n}{2}}\left( \frac{q^{n}-1}{q-1}\right) -q^{\frac{n}{2}}\left ( \frac{q+1}{2}\right )+\frac{q^2+1}{2}, \]
which proves the theorem for $n>2.$ For $n=2$ the statement is clear.
\end{proof}

%%%%%%%%%%%%%%%%%%%%%%%%%%%%%%%%%%%%%%%%%%%%%%%%%%%%%%%%%%%%%%%%%%%%%%%%%%%%%%%%%%%%%%%%%%%%%%%%%%%%%%%%%%%

\section{Lower bounds}\label{section3}

In this section we prove a lower bound on the pseudoachromatic index of $\mathrm{AG}(n,q)$. 
To achieve this we present complete colorings of $\mathrm{AG}(n,q)$. 
The constructions depend on the parity of the space dimension.
First, we prove some geometric properties of affine and projective spaces.

\begin{proposition}
\label{dim}
Let $n>1$ be an integer, $\Pi _1$ and $\Pi _2$ be subspaces in 
$\mathrm{PG}(n,q)=\mathrm{AG}(n,q)\cup {\mathcal H}_{\infty }.$
Let $d_i$ denote the dimension of $\Pi _i$ for $i=1,2.$ 
Suppose that $\Pi _1\cap\Pi _2\cap {\mathcal H}_{\infty }$ is an
$m$-dimensional subspace and $d_1+d_2=n+1+m.$
Then $\Pi _1\cap\Pi _2\cap \mathrm{AG}(n,q)$ is an $(m+1)$-dimensional
subspace in $\mathrm{AG}(n,q).$ 

In particular, $\Pi _1\cap\Pi _2$ is a single point in $\mathrm{AG}(n,q)$ when $\Pi _1\cap\Pi _2\cap {\mathcal H}_{\infty }=\emptyset $ 
and $d_1+d_2=n.$

\end{proposition}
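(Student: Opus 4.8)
The plan is to use the Grassmann (dimension) formula for projective subspaces, $\dim(\Pi_1\cap\Pi_2)=d_1+d_2-\dim\langle\Pi_1,\Pi_2\rangle$, and to combine it with the known dimension of the intersection inside the hyperplane at infinity. Concretely, I would first observe that $\langle\Pi_1,\Pi_2\rangle$ is a subspace of $\mathrm{PG}(n,q)$, so its dimension is at most $n$; hence the Grassmann formula gives
\[
\dim(\Pi_1\cap\Pi_2)\ \geq\ d_1+d_2-n\ =\ m+1,
\]
using the hypothesis $d_1+d_2=n+1+m$. So the intersection $\Pi_1\cap\Pi_2$ is a projective subspace of dimension at least $m+1$.

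Next I would pin down the intersection exactly by looking at how much of it lies on $\mathcal{H}_\infty$. By hypothesis $\Pi_1\cap\Pi_2\cap\mathcal{H}_\infty$ has dimension $m$. Since $\Pi_1\cap\Pi_2$ has dimension at least $m+1$ and properly contains a subspace of dimension $m$ contained in $\mathcal{H}_\infty$, it cannot itself be contained in $\mathcal{H}_\infty$; consequently its intersection with the hyperplane $\mathcal{H}_\infty$ has dimension exactly one less than $\dim(\Pi_1\cap\Pi_2)$ (a hyperplane meets a subspace either in the whole subspace or in a hyperplane of it). Combining $\dim(\Pi_1\cap\Pi_2)-1=m$ with the earlier inequality $\dim(\Pi_1\cap\Pi_2)\geq m+1$ forces $\dim(\Pi_1\cap\Pi_2)=m+1$ exactly, and then $\Pi_1\cap\Pi_2\cap\mathcal{H}_\infty$ is precisely the $m$-dimensional subspace named in the statement. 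Removing the points at infinity, $\Pi_1\cap\Pi_2\cap\mathrm{AG}(n,q)=(\Pi_1\cap\Pi_2)\setminus\mathcal{H}_\infty$ is an $(m+1)$-dimensional affine subspace, which is what we want. (I should note that this affine piece is nonempty precisely because $\Pi_1\cap\Pi_2$ is not contained in $\mathcal{H}_\infty$.)

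For the ``in particular'' clause, I set $m=-1$, interpreting the empty set as the $(-1)$-dimensional projective subspace, so the hypothesis $d_1+d_2=n+1+m$ becomes $d_1+d_2=n$ and $\Pi_1\cap\Pi_2\cap\mathcal{H}_\infty=\emptyset$; the general statement then yields that $\Pi_1\cap\Pi_2\cap\mathrm{AG}(n,q)$ is a $0$-dimensional affine subspace, i.e.\ a single point. The main thing to be careful about is the dimension-convention bookkeeping — making sure the Grassmann formula, the ``hyperplane section'' fact, and the empty-set-as-dimension-$(-1)$ convention are all applied consistently — but no step involves a genuinely hard argument; it is all standard projective-geometry dimension counting.
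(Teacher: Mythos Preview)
Your proof is correct and follows essentially the same approach as the paper: both use the Grassmann dimension formula to get $\dim(\Pi_1\cap\Pi_2)\geq m+1$, and the hypothesis on $\Pi_1\cap\Pi_2\cap\mathcal{H}_\infty$ to bound it above by $m+1$. The only cosmetic difference is the order of the two bounds and that you spell out the ``hyperplane section'' reasoning behind the upper bound more explicitly than the paper does.
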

 
\begin{proof}
Since $\Pi _1\cap\Pi _2\cap {\mathcal H}_{\infty }$  is an
$m$-dimensional subspace, the subspace $\Pi _1\cap\Pi _2$ has dimension at most $m+1.$ 
On the other hand, the dimension formula yields
$$\mathrm{dim}(\Pi _1\cap \Pi _2)=\mathrm{dim}\, \Pi _1+\mathrm{dim}\, \Pi _v-\mathrm{dim}\langle \Pi _1,\Pi _2\rangle
\geq d_1+d_2-n=m+1,$$ 
therefore $\Pi _1\cap\Pi _2\cap \mathrm{AG}(n,q)$ is an $(m+1)$-dimensional
subspace in $\mathrm{AG}(n,q).$ 

If $m=-1$, $\Pi _1\cap\Pi _2\cap {\mathcal H}_{\infty }=\emptyset ,$ 
but the subspace $\Pi _1\cap\Pi _2$ has dimension 0 in
$\mathrm{PG}(n,q).$ Hence, it is a single point and this point is 
not in ${\mathcal H}_{\infty }$ so it is in $\mathrm{AG}(n,q).$
\end{proof}

In the following proposition we present a partition of the points of 
$\mathrm{PG}(2k,q)$ that we will call \emph{good partition}
in the rest of the paper.

\begin{proposition}
\label{assign}
Let $k\geq 1$ be an integer and $Q\in \mathrm{PG}(2k,q)$ be an arbitrary point.
The points of $\mathrm{PG}(2k,q)\setminus \{ Q\} $ can be divided into
into two subsets, say ${\mathcal A}$ and ${\mathcal B},$
and one can assign a subspace $S(P)$ to each point 
$P\in {\mathcal A}\cup {\mathcal B}$, such that the following holds true.
\begin{itemize}
\item
$P\in S(P)$ for all points,
\item  
$|{\mathcal A}|= q^2\left( \frac{q^{2k}-1}{q^2-1}\right)$  
and, if $A\in {\mathcal A}$ then 
$S(A)$ is a $k$-dimensional
subspace, 
%\item
%If $A_1\in {\mathcal A}$ and $A_2\in {\mathcal A}$ and
%the points $A_1,A_2$ and $Q$ are collinear, then 
%$S(A_1)=S(A_2).$ 
\item
$|{\mathcal B}|= q\left (\frac{q^{2k}-1}{q^2-1}\right )$ and,
if $B\in {\mathcal B}$ then  
$S(B)$  is a $(k-1)$-dimensional
subspace,
\item
$S(A)\cap S(B)=\emptyset $ for all $A\in {\mathcal A}$ and $B\in {\mathcal B}.$
\end{itemize}
\end{proposition}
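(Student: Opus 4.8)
The plan is an induction on $k$, built around the reformulation that it suffices to produce the partition $\mathcal A\sqcup\mathcal B=\mathrm{PG}(2k,q)\setminus\{Q\}$ together with the assignment $S$ so that $S(A)\subseteq\mathcal A\cup\{Q\}$ for every $A\in\mathcal A$ and $S(B)\subseteq\mathcal B$ (in particular $Q\notin S(B)$) for every $B\in\mathcal B$: then $S(A)\cap S(B)\subseteq(\mathcal A\cup\{Q\})\cap\mathcal B=\emptyset$ for free, so the extra requirement $Q\notin S(B)$ is exactly the invariant that will carry the induction. The base case $k=1$ is immediate: in $\mathrm{PG}(2,q)$ choose a line $\ell$ through $Q$, let $\mathcal B=\ell\setminus\{Q\}$, $\mathcal A=\mathrm{PG}(2,q)\setminus\ell$, $S(B)=B$, $S(A)=\langle Q,A\rangle$; since $A\notin\ell$ the line $\langle Q,A\rangle$ meets $\ell$ only in $Q$, so $S(A)\subseteq\mathcal A\cup\{Q\}$ and $S(B)\subseteq\mathcal B$, and the cardinalities $|\mathcal A|=q^2$, $|\mathcal B|=q$ are as required.

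For the inductive step $k-1\to k$ I would embed $\Sigma\cong\mathrm{PG}(2(k-1),q)$ as a subspace of codimension $2$ in $\mathrm{PG}(2k,q)$ with $Q\in\Sigma$, and fix a line $\lambda$ skew to $\Sigma$. The $q+1$ hyperplanes $H_w=\langle\Sigma,w\rangle$, $w\in\lambda$, all contain $\Sigma$, and a dimension count gives $H_w\cap H_{w'}=\Sigma$ for $w\neq w'$; hence the affine pieces $H_w\setminus\Sigma$, each with $q^{2k-1}$ points, partition $\mathrm{PG}(2k,q)\setminus\Sigma$. Fix two points $w_0\neq w_1$ of $\lambda$, apply the induction hypothesis to $(\Sigma,Q)$ to obtain $\mathcal A',\mathcal B'$ with subspaces $S'$, and set
$$\mathcal A=\mathcal A'\cup\bigcup_{w\neq w_0}(H_w\setminus\Sigma),\qquad\mathcal B=\mathcal B'\cup(H_{w_0}\setminus\Sigma).$$
Writing $N=\tfrac{q^{2k}-1}{q^2-1}$ and $N'=\tfrac{q^{2k-2}-1}{q^2-1}$, one has $q^2N-q^2N'=q^{2k}=q\cdot q^{2k-1}$ and $qN-qN'=q^{2k-1}$, so the cardinalities of $\mathcal A$ and $\mathcal B$ come out right.

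The subspaces would be assigned as follows. For $A'\in\mathcal A'$ put $S(A')=\langle S'(A'),w_1\rangle$ and for $B'\in\mathcal B'$ put $S(B')=\langle S'(B'),w_0\rangle$; these raise the dimension by exactly one, and their trace on $\Sigma$ is the old subspace. For the new points I would use two fixed anchors chosen once inside $\Sigma$: a $(k-1)$-subspace $\Theta_0:=S'(A_0')\subseteq\mathcal A'\cup\{Q\}$ and a $(k-2)$-subspace $\Xi_0:=S'(B_0')\subseteq\mathcal B'$; then set $S(A)=\langle\Theta_0,A\rangle$ for $A\in H_w\setminus\Sigma$ with $w\neq w_0$, and $S(B)=\langle\Xi_0,B\rangle$ for $B\in H_{w_0}\setminus\Sigma$. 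Each $S(A)$ then has dimension $k$, lies in a single $H_w$ with $w\neq w_0$, and meets $\Sigma$ in a subspace contained in $\mathcal A'\cup\{Q\}$; each $S(B)$ has dimension $k-1$, lies in $H_{w_0}$, meets $\Sigma$ in a subspace contained in $\mathcal B'$, and hence avoids $Q$. Since the $H_w\setminus\Sigma$ with $w\neq w_0$ are all disjoint from $H_{w_0}\setminus\Sigma$, this yields $S(A)\subseteq\mathcal A\cup\{Q\}$ and $S(B)\subseteq\mathcal B$, which is all that is needed.

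The part I expect to be the real content is the inductive bookkeeping rather than the geometry of the pencil (which is routine): one must notice that the statement has to be strengthened by the invariant $Q\notin S(B)$, and one must handle the new points whose projection from $w$ (respectively $w_0$) lands outside $\mathcal A'$ (respectively $\mathcal B'$) — for those a lifted old subspace is unavailable, so one falls back on the fixed anchors $\Theta_0,\Xi_0$, while still keeping every $S(A)$ inside a single $H_w$ with $w\neq w_0$ and every $S(B)$ inside $H_{w_0}$. Checking that $\langle\Theta_0,A\rangle$ and $\langle\Xi_0,B\rangle$ really have the prescribed dimensions and traces, and that all these containments hold simultaneously, is where the care is required.
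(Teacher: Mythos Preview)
Your proof is correct and follows essentially the same approach as the paper's: induction on $k$ via a pencil of hyperplanes through a codimension-$2$ subspace $\Sigma\ni Q$, placing the new $\mathcal B$-points in one hyperplane $H_{w_0}$ and the new $\mathcal A$-points in the others, and assigning subspaces either by lifting an old $S'(\cdot)$ with an external point or by coning a new point over a fixed anchor inside $\Sigma$. The only notable difference is packaging: you carry the strengthened invariant $S(A)\subseteq\mathcal A\cup\{Q\}$, $S(B)\subseteq\mathcal B$ through the induction (so disjointness is automatic), whereas the paper argues disjointness directly by observing that every $S(A)$ lies in some $H_w$ with $w\neq w_0$ and every $S(B)$ lies in $H_{w_0}$, hence $S(A)\cap S(B)\subseteq\Sigma$, where the traces are old $S'$-subspaces of opposite type.
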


\begin{proof}
We prove by induction on $k.$ If $k=1$ then let $\{ \ell _0,\ell _1,\dots ,\ell _q\} $
be the set of lines through $Q.$ Let ${\mathcal A}$ and ${\mathcal B}$ consist of points $\mathrm{PG}(2,q)\setminus \{ \ell _0\} $ 
and $ \ell _0\setminus \{ Q\},$ respectively. If $A\in {\mathcal A}$ then let 
$S(A)$ be the line $AQ,$ if $B\in {\mathcal B}$ then let 
$S(B)$ be the point $B.$ These sets clearly fulfill the 
prescribed conditions, so $\mathrm{PG}(2,q)$ admits a good partition.

Now, let us suppose that $\mathrm{PG}(2k,q)$ admits a good partition. 
In $\mathrm{PG}(2k+2,q)$ take a $2k$-dimensional
subspace $\Pi $ which contains the point $Q.$ Then $\Pi $ is isomorphic to $\mathrm{PG}(2k,q),$ hence it has a good 
partition $\{ Q\} \cup {\mathcal A}'\cup {\mathcal B}'$ with assigned subspaces $S'(P).$ 
Let $H_0,H_1,\ldots ,H_q$ be the pencil of hyperplanes in $\mathrm{PG}(2k+2,q)$ with carrier $\Pi .$
Let ${\mathcal B}={\mathcal B}'\cup (H_0\setminus \Pi )$ and 
${\mathcal A}= \mathrm{PG}(2k+2,q)\setminus ({\mathcal B}\cup \{ Q\} ).$ 
Notice that ${\mathcal A}'$ and ${\mathcal B}'$ have the required cardinalities, because
$$
\begin{aligned}
|{\mathcal A}'| & =  \frac{q^{2k+3}-1}{q-1}-(|{\mathcal B}|+1)=(q+1)\frac{q^{2k+3}-1}{q^2-1}-q\left( \frac{q^{2k+2}-1}{q^2-1}\right)-1 \\
 & =q^2\left (\frac{q^{2k+2}-1}{q^2-1}\right), \\
|{\mathcal B}'| & =|{\mathcal B}|+ |(H_0\setminus \Pi |=q\left( \frac{q^{2k}-1}{q^2-1}\right) + q^{2k+1}=q\left (\frac{q^{2k+2}-1}{q^2-1}\right ).
\end{aligned}
$$

We assign the subspaces in the following way.
If $A\in {\mathcal A}'$ then let
$S(A)$ be the $(k+1)$-dimensional subspace $\langle S'(A), P\rangle $ where 
$P\in \cup_{i=1}^qH_i$ is an arbitrary point, whereas,
if $A\in (\cup_{i=1}^qH_i)\setminus \Pi $ then let 
$S(A)$ be the $(k+1)$-dimensional subspace $\langle A, S'(P)\rangle $ where 
$P\in {\mathcal A}'$ is an arbitrary point. In both cases $S(A)\subset \cup_{i=1}^qH_i$
for all $A\in {\mathcal A}.$  
Similarly, if $B\in {\mathcal B}'$ then let
$S(B)$ be the $k$-dimensional subspace $\langle S'(B), P\rangle $ where 
$P\in H_0$ is an arbitrary point, whereas, 
if $B\in H_0\setminus \Pi $ then let 
$S(B)$ be the $k$-dimensional subspace $\langle B, S'(P)\rangle $ where 
$P\in {\mathcal B}'$ is an arbitrary point. Also here, in both cases, $S(B)\subset H_0$
for all $B\in {\mathcal B}.$  
Moreover, the assigned subspaces satisfy the intersection condition because
if $A\in {\mathcal A}$ and $B\in {\mathcal B}$ are arbitrary points then
$$S(A)\cap S(B) =(S(A)\cap (\cup_{i=1}^qH_i))\cap (S(B)\cap H_0) = S'(A)\cap S'(B)\cap \Pi=\emptyset .$$

Hence $\mathrm{PG}(2k+1,q)$ also admits a good partition, the statement is proved.
\end{proof}

The next theorem proves Theorem \ref{thm:dos} for even dimensional finite affine spaces. 
Notice that the lower bound depends on the parity of $q$, but its magnitude is
$\frac{\sqrt{v}(v-1)}{2(q-1)}$ in both cases, where $v=q^n$.

\begin{theorem}\label{even-part}
If $k>1$ then the colorings of the even dimensional affine space, $\mathrm{AG}(2k,q),$ satisfy
the inequalities 
$$\psi' (\mathrm{AG}(2k,q))\geq \left\{  
\begin{array}{ll}
\frac{q^k(q^{2k}-1)}{2(q-1)}, & \text{if q is odd}, \\  
\frac{q^k(q^{2k}-q)}{2(q-1)} +1, & \text{if q is even.}
\end{array}
\right. $$
\end{theorem}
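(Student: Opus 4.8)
The plan is to construct an explicit complete coloring of $\mathrm{AG}(2k,q)$ whose number of color classes matches the claimed bound, using the good partition of $\mathrm{PG}(2k,q)$ provided by Proposition \ref{assign}. Embed $\mathrm{AG}(2k,q)$ as $\mathrm{PG}(2k,q)\setminus\mathcal{H}_\infty$, pick the distinguished point $Q$ to lie on $\mathcal{H}_\infty$, and take the good partition $\mathrm{PG}(2k,q)\setminus\{Q\}=\mathcal{A}\cup\mathcal{B}$ with assigned subspaces $S(P)$. The idea is that each affine line $\ell$ of $\mathrm{AG}(2k,q)$ has a well-defined point at infinity $\ell_\infty\in\mathcal{H}_\infty$; if $\ell_\infty=Q$ the line is ``special'', otherwise $\ell_\infty\in\mathcal{A}$ or $\ell_\infty\in\mathcal{B}$, and we will color $\ell$ according to which $k$-dimensional (resp. $(k-1)$-dimensional) subspace $S(\ell_\infty)$ it ``belongs to'' after intersecting with the affine part. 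Concretely, for a line $\ell$ with $\ell_\infty=P\in\mathcal{A}$, consider the subspace $\langle S(P),\text{(affine point of }\ell)\rangle$ or rather the affine $k$-flat $\Sigma(\ell)$ through a chosen point of $\ell$ in the direction of $S(P)$; Proposition \ref{dim} (with $d_1+d_2=n$, empty intersection at infinity) guarantees the relevant transversality so that the color classes intersect each other as required for completeness.

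The key steps, in order: (1) Set up the correspondence between affine lines and their infinite points, and partition the line set of $\mathrm{AG}(2k,q)$ into three families according to whether $\ell_\infty=Q$, $\ell_\infty\in\mathcal{A}$, or $\ell_\infty\in\mathcal{B}$. (2) For lines with $\ell_\infty\in\mathcal{A}$: the $q^{2k}$ affine points, grouped into cosets of the $k$-dimensional direction $S(P)$, give $q^{2k}/q^k=q^k$ affine $k$-flats per direction; assign one color to each such flat. The number of directions in $\mathcal{A}$ is $|\mathcal{A}|/q = q\cdot\frac{q^{2k}-1}{q^2-1}$ (each infinite point of a given parallel class of affine lines is shared by $q^{2k}$ affine lines but the $\mathcal{A}$-count is of points, so one reconciles: $\mathcal{A}$ consists of affine-direction points, each contributing $q^k$ color classes). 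This yields roughly $q^k\cdot q\cdot\frac{q^{2k}-1}{q^2-1}=\frac{q^{k+1}(q^{2k}-1)}{q^2-1}$ colors from the $\mathcal{A}$-part. (3) Handle the $\mathcal{B}$-part and the special lines through $Q$ similarly, absorbing them into existing color classes or adding a bounded number of new ones, and check the arithmetic collapses to $\frac{q^k(q^{2k}-1)}{2(q-1)}$ when $q$ is odd and to $\frac{q^k(q^{2k}-q)}{2(q-1)}+1$ when $q$ is even; the parity of $q$ enters because $\frac{q+1}{q^2-1}\cdot(\cdots)$ is an integer only after a small adjustment that differs in the two cases. (4) Verify completeness: given two colors, one coming from a flat $\Sigma_1$ in direction $S(A_1)$ and one from a flat $\Sigma_2$ in direction $S(A_2)$ (or from $\mathcal{B}$), exhibit a line in class $1$ and a line in class $2$ meeting in an affine point — this is exactly where $S(A)\cap S(B)=\emptyset$ from Proposition \ref{assign} and the single-point intersection from Proposition \ref{dim} are used, since two complementary-dimension flats in general position meet in a point, and through that point run a line of each flat.

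The main obstacle I expect is step (4) together with the bookkeeping in step (2)--(3): ensuring that \emph{every} pair of colors is realized by an intersecting pair of lines, including cross-pairs between an $\mathcal{A}$-color and a $\mathcal{B}$-color and pairs involving the special lines through $Q$, while simultaneously getting the count to land exactly on the stated value (not merely asymptotically). The $S(A)\cap S(B)=\emptyset$ condition is precisely engineered so that an $\mathcal{A}$-flat and a $\mathcal{B}$-flat of complementary dimensions $k$ and $k-1$ span at most everything and meet in a point in $\mathrm{AG}(2k,q)$, but one must confirm the dimension count $k+(k-1)+1 = 2k$ lets Proposition \ref{dim} apply with $m=-1$, and that distinct $\mathcal{A}$-flats in the \emph{same} direction (which never meet, being parallel) don't get the same color — handled by construction since each parallel flat is its own class. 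Finally, the even-$q$ correction term ``$+1$'' and the missing ``$q$'' in the numerator should emerge from the special class through $Q$ and from a parity obstruction to partitioning $q+1$ lines through $Q$ evenly, and pinning that down exactly is the delicate part.
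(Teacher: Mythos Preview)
Your proposal rests on a misapplication of Proposition~\ref{assign}. That proposition partitions the points of the \emph{whole} projective space $\mathrm{PG}(2k,q)$; if you take the good partition of $\mathrm{PG}(2k,q)\setminus\{Q\}$ with $Q\in\mathcal{H}_\infty$, the sets $\mathcal{A}$ and $\mathcal{B}$ together contain all $\frac{q^{2k+1}-1}{q-1}-1$ remaining points, most of which are affine. So ``$\ell_\infty\in\mathcal{A}$ or $\ell_\infty\in\mathcal{B}$'' does not partition the directions at infinity in any usable way, and the subspaces $S(P)$ you get are $k$- and $(k-1)$-dimensional subspaces of $\mathrm{PG}(2k,q)$, not of $\mathcal{H}_\infty$. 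Nor can you apply Proposition~\ref{assign} to $\mathcal{H}_\infty$ itself: for $\mathrm{AG}(2k,q)$ the hyperplane at infinity is $\mathrm{PG}(2k-1,q)$, which is odd-dimensional, and the proposition is stated (and proved) only for even-dimensional projective spaces. In short, the good partition is the tool for the \emph{odd}-dimensional affine case (Theorem~\ref{odd-part}), where $\mathcal{H}_\infty\cong\mathrm{PG}(2k,q)$; you have imported the wrong piece of machinery.

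Even granting a coherent setup, your completeness argument has a gap: Proposition~\ref{assign} only guarantees $S(A)\cap S(B)=\emptyset$ for $A\in\mathcal{A}$, $B\in\mathcal{B}$. For two $\mathcal{A}$-colors with directions $S(A_1),S(A_2)$ you have no disjointness, so Proposition~\ref{dim} does not give you an affine intersection point; two $k$-flats whose directions at infinity meet can be disjoint in $\mathrm{AG}(2k,q)$.

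The paper's actual construction is different and simpler. Since $\mathcal{H}_\infty\cong\mathrm{PG}(2k-1,q)$, it carries a $(k-1)$-spread $\mathcal{S}=\{S^1,\dots,S^{q^k+1}\}$. One then \emph{pairs} the points of $\mathcal{H}_\infty$ so that the two members $(U,V)$ of each pair lie in distinct spread elements $S(U)\neq S(V)$; each pair contributes $q^k$ color classes, one for each of the $q^k$ parallel affine $k$-flats with direction $S(U)$ (together with the matching $k$-flat with direction $S(V)$). Completeness follows because for any two classes one can always choose representatives whose spread elements at infinity are disjoint, and then Proposition~\ref{dim} yields a common affine point. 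The parity of $q$ enters only because $|\mathcal{H}_\infty|=\frac{q^{2k}-1}{q-1}$ is even when $q$ is odd and odd when $q$ is even; in the latter case one point is left unpaired and its parallel class of lines forms one extra color class, which accounts exactly for the ``$-q$'' in the numerator and the ``$+1$''.
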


\begin{proof}
Consider the projective closure of the affine space,
$\mathrm{PG}(2k,q) = \mathrm{AG}(2k,q)\cup \mathcal{H}_{\infty }.$
The parallel classes of affine lines correspond to the points of
${\mathcal H}_{\infty }.$ The hyperplane at infinity is isomorphic to
the projective space $\mathrm{PG}(2k-1,q),$ hence it has a $(k-1)$-spread
${\cal S}=\{ S^1,S^2,\dots ,S^{q^k+1}\} $. The elements of $\cal S$ are
pairwise disjoint $(k-1)$-dimensional subspaces (see \cite[Theorem 4.1]{jwph3}). 
Let $\{ P^i_1,P^i_2,\dots ,P^i_{(q^k-1)(q-1)}\} $ be the set of points of $S^i$ for $i=1,2,\dots , q^k+1$.

We define a pairing on the set of points of ${\mathcal H}_{\infty }$ which depends on the parity of $q.$ 
On the one hand, if $q$ is odd then let $(P^i_j,P^{i+1}_j)$ be the pairs for $i=1,3,5,\dots ,q^k$ and $j= 1,2,\dots , \frac{q^k-1}{q-1}$. 
On the other hand, if $q$ is even then ${\mathcal H}_{\infty }$ has an odd number of points, thus we give the paring on the 
set of points ${\mathcal H}_{\infty }\setminus \{ P^1_1\}$: let $(P^i_j,P^{i+1}_j)$ be the pairs for $i=4,6,\dots ,q^k$ and $j= 1,2,\dots , \frac{q^k-1}{q-1}$, and let $(P^1_j,P^{2}_j),$ $(P^2_{j+1},P^{3}_{j+1}),$  $(P^1_{j+1},P^{3}_{j})$
and $(P^2_1,P^3_1)$ be the pairs for $i=1,2,3$ and $j= 2,4,6,\dots , \frac{q^k-1}{q-1}-1.$
 
For $P\in {\mathcal H}_{\infty }$ we denote by $S(P)$ the unique element of $\cal S$ that contains $P.$ 
Consider the set of $k$-dimensional subspaces of 
$\mathrm{PG}(2k,q)$ intersecting ${\mathcal H}_{\infty }$ in $S(P).$ The affine parts of
these subspaces determine a set of $q^k$ parallel $k$-dimensional subspaces of 
$\mathrm{AG}(2k,q)$, denoted by $A(P)=\{ \Pi _{P,1},\Pi _{P,2},\dots ,\Pi _{P,q^k}\}. $ 

Let $(U,V)$ be any pair of points. Then, by defintion,
$S(U)\neq S(V).$ Let the color class $C_{U,V,i}$ contain the 
lines joining either $U$ and a point from $\Pi _{U,i},$ or $V$ and a point 
from $\Pi _{V,i}$, for $i=1,2,\dots ,q^k$. Clearly, $(U,V)$ defines $q^k$ color classes, each one consists 
of the parallel lines of one subspace in $A(U)$ and the parallel lines of one subspace in $A(V)$.
Finally, if $q$ is even, then let the color class $C_1$ consist of all lines 
of $\mathrm{AG}(2k,q)$ whose point at infinity is $P^1_1.$

We divided the points of ${\mathcal H}_{\infty }$ into $\frac{q^{2k}-1}{2(q-1)}$ 
pairs if $q$ is odd, and into $\frac{q^{2k}-q}{2(q-1)}$ 
pairs if $q$ is even. Consequently, the number of color classes is equal to $\frac{q^{2k}-1}{2(q-1)}q^k$ when $q$ is odd, and
it is equal to $\frac{q^{2k}-q}{2(q-1)}q^k+1$ when $q$ is even.

Now, we show that the coloring is 
complete. The class $C_1$ obviously intersects any other class.
Let $C_{U,V,i}$ and $C_{W,Z,j}$ be two color classes.
Then $S(U)$ and $S(V)$ are distinct elements of the spread $\cal S$ and $S(W)$ is also an element of $\cal S$. 
Hence we may assume, without loss of generality, that 
$S(U)\cap S(W)=\emptyset .$ As $\mathrm{dim}\, (S(U)\cup \Pi _{U,i})=\mathrm{dim}\, (S(W)\cup \Pi _{W,j})=k$
in $\mathrm{PG}(2k,q),$ by
Proposition \ref{dim}, we have that $\Pi _{U,i}\cap \Pi _{W,j}$ consists of a single point in 
$\mathrm{AG}(2k,q).$
Notice that the coloring is not proper, because the same argument shows that 
$\Pi _{U,i}\cap \Pi _{V,i}$ is also a single point in $\mathrm{AG}(2k,q).$
\end{proof}

For odd dimensional finite affine spaces we have a slightly weaker estimate. In this case, 
the magnitude of the lower bound is $\frac{1}{\sqrt{q}}\cdot\frac{\sqrt{v}(v-1)}{q-1},$ where $v=q^n$.

\begin{theorem}\label{odd-part}
If $k\geq 1$ then the colorings of the odd dimensional affine space, $\mathrm{AG}(2k+1,q),$ 
satisfy the inequality 
$$q^{k+2}\left(\frac{q^{2k}-1}{q^2-1}\right)+1   \leq  \psi' (\mathrm{AG}(2k+1,q)).$$
\end{theorem}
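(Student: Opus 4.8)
The plan is to reduce the odd-dimensional case $\mathrm{AG}(2k+1,q)$ to a situation governed by a \emph{good partition}, in the spirit of the proof of Theorem~\ref{even-part}, but using Proposition~\ref{assign} applied to the hyperplane at infinity. Concretely, pass to the projective closure $\mathrm{PG}(2k+1,q)=\mathrm{AG}(2k+1,q)\cup\mathcal H_\infty$, where $\mathcal H_\infty\cong\mathrm{PG}(2k,q)$. Fix an arbitrary point $Q\in\mathcal H_\infty$ and apply Proposition~\ref{assign} to obtain the partition $\mathcal H_\infty\setminus\{Q\}=\mathcal A\cup\mathcal B$ together with the assigned subspaces $S(P)$: for $A\in\mathcal A$ the subspace $S(A)$ is $k$-dimensional (living inside $\mathcal H_\infty\cong\mathrm{PG}(2k,q)$), for $B\in\mathcal B$ it is $(k-1)$-dimensional, and crucially $S(A)\cap S(B)=\emptyset$ for all such pairs. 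The sizes are $|\mathcal A|=q^2\frac{q^{2k}-1}{q^2-1}$ and $|\mathcal B|=q\frac{q^{2k}-1}{q^2-1}$.

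Next I would promote each assigned subspace to a family of parallel affine subspaces, exactly as in Theorem~\ref{even-part}. For $A\in\mathcal A$, the $(k+1)$-dimensional subspaces of $\mathrm{PG}(2k+1,q)$ meeting $\mathcal H_\infty$ precisely in $S(A)$ cut out $q^{k+1}$ parallel affine $(k+1)$-dimensional subspaces $\Pi_{A,1},\dots,\Pi_{A,q^{k+1}}$; since $\dim S(A)+\dim S(A)=2k=(2k+1)+1+(k-1)$ does not immediately give a point-intersection, the right pairing to use is $\mathcal A$ against $\mathcal A$: if $A,A'\in\mathcal A$ and $S(A)\cap S(A')$ is $(m)$-dimensional then $\langle S(A)\cup\Pi_{A,i}\rangle$ and $\langle S(A')\cup\Pi_{A',j}\rangle$ are $(k+1)$-dimensional in $\mathrm{PG}(2k+1,q)$ and by Proposition~\ref{dim} their affine parts meet in an $(m+1)$-dimensional affine subspace — in particular in a nonempty set, so the corresponding color classes intersect. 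The point of introducing $\mathcal B$ and $Q$ is to absorb the ``leftover'' points of $\mathcal H_\infty$ into a single giant color class (or a few extra classes) the way $C_1$ worked in the even case: let one color class consist of all affine lines whose point at infinity lies in $\mathcal B\cup\{Q\}$ (or a suitable sub-bundle thereof), guaranteeing it meets every other class. The count then comes out to roughly $|\mathcal A|\cdot q^{k+1}=q^{k+3}\frac{q^{2k}-1}{q^2-1}$, which is larger than the claimed bound; so in fact the construction must be tuned so that each $A\in\mathcal A$ contributes only $q^{k+1}$ classes but the classes are \emph{shared} in pairs as in the even case, i.e. we pair up points of $\mathcal A$ and each pair $(A,A')$ contributes $q^{k+1}$ classes $C_{A,A',i}$ each made of the parallel lines through $A$ in $\Pi_{A,i}$ together with the parallel lines through $A'$ in $\Pi_{A',i}$. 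With $\tfrac12|\mathcal A|$ pairs this yields $\tfrac12 q^{k+3}\frac{q^{2k}-1}{q^2-1}$ — still off by a factor — so the honest reading is that only the $(k-1)$-dimensional structure is usable for the shared trick: pair points of $\mathcal B$, each pair contributing $q^{k+1}$ classes built from parallel $k$-dimensional affine subspaces extending $S(B)$, giving $\tfrac12|\mathcal B|q^{k+1}=\tfrac12 q^{k+2}\frac{q^{2k}-1}{q^2-1}$, and then double it because each such pair actually supports $q^{k+1}$ classes from the two sides while $\mathcal A$ supplies the completeness glue; combined with the $+1$ from the absorbing class one lands on $q^{k+2}\frac{q^{2k}-1}{q^2-1}+1$.

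After fixing the construction, the verification has two parts. First, \emph{counting}: show the color classes are pairwise distinct and total $q^{k+2}\frac{q^{2k}-1}{q^2-1}+1$ — this is a routine arithmetic check using $|\mathcal A|$, $|\mathcal B|$ from Proposition~\ref{assign} and the fact that each relevant subspace bundle has the stated number of parallel members. Second, \emph{completeness}: take two classes and exhibit a point where a line of one meets a line of the other. The absorbing class $C_1$ meets everything since every affine $k$-subspace extending some $S(P)$ has points, hence lines, whose point at infinity lies in the absorbing bundle — wait, one must be careful here, so more robustly: $C_1$ collects \emph{all} affine lines through a fixed point at infinity, so any affine point lies on such a line, hence $C_1$ meets every class. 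For two classes $C_{U,V,i}$ and $C_{W,Z,j}$ coming from the pairing, use that $S(U),S(V),S(W),S(Z)$ are subspaces in $\mathcal H_\infty\cong\mathrm{PG}(2k,q)$ and invoke the disjointness/dimension bookkeeping of Proposition~\ref{assign} together with Proposition~\ref{dim}: some pair among $\{U,V\}\times\{W,Z\}$ has the property that the span of the corresponding affine subspaces is all of $\mathrm{PG}(2k+1,q)$ with empty trace at infinity, forcing the affine parts to meet in exactly one point — that point lies on a line of each class.

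The main obstacle I anticipate is getting the bookkeeping to match the exact target $q^{k+2}\frac{q^{2k}-1}{q^2-1}+1$: the naive ``pair-and-extend'' recipe over-counts, so one has to be disciplined about \emph{which} bundle (the $k$-dimensional $\mathcal A$-bundles or the $(k-1)\mapsto k$-dimensional $\mathcal B$-bundles) actually furnishes color classes versus which merely furnishes the incidences needed for completeness, and about how many parallel affine translates of a $k$-dimensional subspace live in $\mathrm{AG}(2k+1,q)$ (namely $q^{k+1}$). A secondary subtlety is ensuring every pair of the finally-chosen classes is connected; because the $\mathcal A$/$\mathcal B$ disjointness in Proposition~\ref{assign} is one-directional ($\mathcal A$ vs.\ $\mathcal B$, not $\mathcal A$ vs.\ $\mathcal A$), the completeness argument between two $\mathcal A$-derived (or two $\mathcal B$-derived) classes needs the extra dimension in $\mathrm{PG}(2k+1,q)$ and a short case analysis via Proposition~\ref{dim}, which is exactly where the parity mismatch between $2k+1$ and the even-dimensional machinery bites.
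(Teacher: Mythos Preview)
Your proposal circles around the right ingredients but never lands on the construction that actually works, and the arithmetic errors are symptoms of this. The key idea you are missing is that the color classes should be built by \emph{cross}-pairing a point of $\mathcal A$ with a point of $\mathcal B$, not by pairing within $\mathcal A$ or within $\mathcal B$. Proposition~\ref{assign} guarantees $S(A)\cap S(B)=\emptyset$ only for $A\in\mathcal A$ and $B\in\mathcal B$; this cross-disjointness is precisely what Proposition~\ref{dim} needs, since then $\dim S(A)+\dim S(B)=k+(k-1)=2k-1$ and the ambient projective subspaces containing them have dimensions $(k+1)$ and $k$, summing to $2k+1$ with empty intersection at infinity, hence a single affine point of intersection. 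Your attempts to pair $\mathcal A$ with $\mathcal A$ or $\mathcal B$ with $\mathcal B$ cannot use this guarantee, which is why your completeness argument keeps stalling.

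There is also a counting slip that propagates through your bookkeeping: for $A\in\mathcal A$ the subspace $S(A)$ is $k$-dimensional in $\mathcal H_\infty$, so the $(k+1)$-dimensional projective subspaces through it have affine parts of size $q^{k+1}$, giving $q^{2k+1}/q^{k+1}=q^{k}$ parallel pieces, not $q^{k+1}$. Dually, each $B\in\mathcal B$ yields $q^{k+1}$ parallel affine $k$-subspaces. Since $|\mathcal A|=q\,|\mathcal B|$, the paper matches each $R_j\in\mathcal B$ with $q$ distinct points $P_{(j-1)q+1},\dots,P_{(j-1)q+q}\in\mathcal A$; the $q^{k+1}$ affine subspaces attached to $R_j$ are split into $q$ blocks of size $q^k$, one block per partner $P_{(j-1)q+i}$. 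This yields $|\mathcal B|\cdot q\cdot q^{k}=q^{k+2}\frac{q^{2k}-1}{q^2-1}$ classes of the form $C_{i,j,m}$, plus the single class $C_1$ of all affine lines through $Q$. Completeness between $C_{i,j,m}$ and $C_{i',j',m'}$ then follows in one line from the $\mathcal A$--$\mathcal B$ disjointness and Proposition~\ref{dim}, with no case analysis needed.
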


\begin{proof}
Consider the projective closure of the affine space
$\mathrm{PG}(2k+1,q)=\mathrm{AG}(2k+1,q)\cup {\mathcal H}_{\infty }.$
Here, the parallel classes of affine lines correspond to the points of
${\mathcal H}_{\infty },$ and the hyperplane at infinity is isomorphic to
the projective space $\mathrm{PG}(2k,q).$ 

By Proposition \ref{assign}, ${\mathcal H}_{\infty }$ admits a good partition.
So we can divide the points of ${\mathcal H}_{\infty }$ into three disjoint classes where
${\cal A}=\{ P_1,P_2, \dots ,P_{t}\} $ and ${\cal B}=\{ R_1,R_2, \dots ,R_{s}\} $  are two sets of 
$t=q^2\left( \frac{q^{2k}-1}{q^2-1}\right) $ and
$s=q\left( \frac{q^{2k}-1}{q^2-1}\right) $ points, respectively, and 
${\cal C}=\{ Q\} $ is a set containing a single point.
We can also assign a subspace $S(U)$ to each 
point $U\in {\mathcal A}\cup {\mathcal B}$ such that 
$S(P_i)\subset {\mathcal H}_{\infty }$ is a $k$-dimensional
subspace if $P_i\in {\mathcal A}$ and 
$S(R_j)\subset {\mathcal H}_{\infty }$ is a $(k-1)$-dimensional
subspace if $R_j\in {\mathcal B},$ furthermore $S(P_i)\cap S(R_j)=\emptyset $ for all $i$ and $j.$

Consider the $(k+1)$-dimensional subspaces of 
$\mathrm{PG}(2k+1,q)$ that intersect ${\mathcal H}_{\infty }$ in $S(P_i).$ 
The affine parts of
these subspaces form a set of $q^k$ parallel $(k+1)$-dimensional subspaces of 
$\mathrm{AG}(2k+1,q).$  Let 
$A(P_i)=\{ \Pi _{P_i,1},\Pi _{P_i,2},\dots ,\Pi _{P_i,q^k}\} $ denote this set.
Similarly, consider the $k$-dimensional subspaces of 
$\mathrm{PG}(2k+1,q)$ intersecting ${\mathcal H}_{\infty }$ in $S(R_j).$ 
The affine parts of
these subspaces induce a set of $q^{k+1}$ parallel $k$-dimensional subspaces of 
$\mathrm{AG}(2k+1,q)$ denoted by 
$B(R_j)=\{ \Pi _{R_j,1},\Pi _{R_j,2},\dots ,\Pi _{R_j,q^{k+1}}\}. $

Now, we define the color classes. Let $C_1$ be the color class that contains 
all lines of $\mathrm{AG}(2k+1,q)$ whose point at infinity is $Q.$
Let the color class $C_{i,j,m}$ contain the 
lines joining either $P_{(j-1)q+i}$ and
a point from $\Pi _{P_{(j-1)q+i},m},$ or $R_j$ and a point from 
$\Pi _{R_j,(i-1)q^k+m}$ for $j=1,2,\dots ,s,$ $i=1,2,\dots ,q$ and $m=1,2,\dots ,q^k$.
Counting the number of color classes of type $C_{i,j,m},$ we obtain $s\cdot q\cdot q^k=q^{k+2}\left( \frac{q^{2k}-1}{q^2-1}\right) .$ 
Each color class consists of the parallel lines of 
one subspace in $A(P_{(j-1)q+i})$ and the parallel lines of one subspace in 
$B(R_j).$ Clearly, the total number of color classes is $1+q^{k+2}\left (\frac{q^{2k}-1}{q^2-1}\right).$
The color class $C_1$ contains 
$q^{2k}$ lines and each of the classes of type $C_{i,j,m}$ consists of 
$q^k+q^{k-1}$ lines.

To prove that the coloring is complete, notice that the class 
$C_1$ obviously intersects any other class. Let
$C_{i,j,m}$ and $C_{i',j',m'}$ be two color classes other than $C_1.$ Consider those elements of $A(P_{(j-1)q+i})$ and $B(R_j')$ whose lines are 
contained in $C_{i,j,m}$ and in $C_{i',j',m'}$, respectively. 
One of these subspaces is a $(k+1)$-dimensional
subspace, whereas the other one is a $k$-dimensional subspace in 
$\mathrm{PG}(2k+1,q),$ and they have no point in common in 
${\mathcal H}_{\infty }.$ Thus,
by Proposition \ref{dim}, their intersection is a single point in 
$\mathrm{AG}(2k+1,q).$

The coloring is not proper, because the same argument shows that 
$\Pi _{P_{(j-1)q+i},m} \cap  \Pi _{R_j,(i-1)q^k+m}$
is also a point in $\mathrm{AG}(2k+1,q),$ thus 
$C_{i,j,m}$ contains a pair of intersecting lines. 
\end{proof}

Now, we are ready to prove our second main theorem.

\begin{proof}[Proof of Theorem \ref{thm:dos}]
If $n$ is even then Theorem \ref{even-part} gives the result at once. If $n$ is odd
then $v=q^{2k+1},$ hence $\sqrt{v/q}=q^k.$ From the estimate of Theorem \ref{odd-part} we get 
$$
\begin{aligned}
q^{k+2}\left( \frac{q^{2k}-1}{q^2-1}\right )+1 & =  \frac{q^{3k+2}-q^{k+2}}{q^2-1}+1 \\
 & = \frac{(q+1)(q^{3k+1}-q^{k})}{q^2-1}-
\frac{q^{3k+1}+q^{k+2}-q^{k+1}-q^{k}}{q^2-1}+1 \\
 & = \frac{1}{\sqrt{q}}\frac{\sqrt{v}(v-1)}{q-1}-\frac{q^{3k+1}+q^{k+2}-q^{k+1}-q^{k}}{q^2-1}+1,
 \end{aligned} 
 $$
which proves the statement.
\end{proof}

Next, recall that a lower bound for the achromatic index require a proper and
complete line-coloring of $\mathrm{AG}(n,q)$. We consider only the even 
dimensional case.

\begin{theorem}\label{teo-achro}
Let $k>1$ and
$\epsilon =0,1$ or $2$, such that $q^k+1\equiv \epsilon$ (mod $3$). Then the achromatic index of the even dimensional finite 
affine space 
$\mathrm{AG}(2k,q)$ satisfies
the inequality
$$ 
\left( \frac{q^k+1-\epsilon }{3}(q^k+2)+\epsilon \right)\frac{q^k-1}{q-1}
\leq \alpha'(\mathrm{AG}(2k,q)).
$$
\end{theorem}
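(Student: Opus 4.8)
The plan is to construct a proper complete line-coloring of $\mathrm{AG}(2k,q)$ by refining the construction used in Theorem~\ref{even-part}, but using the spread $\mathcal S = \{S^1,\dots,S^{q^k+1}\}$ at infinity in triples rather than in pairs, so that properness can be recovered. First I would recall that each $(k-1)$-spread element $S^i$ gives rise, via the $k$-dimensional subspaces of $\mathrm{PG}(2k,q)$ meeting $\mathcal H_\infty$ exactly in $S^i$, to a parallel class $A(S^i) = \{\Pi_{i,1},\dots,\Pi_{i,q^k}\}$ of affine $k$-subspaces, and that for two distinct spread elements $S^i,S^j$ Proposition~\ref{dim} guarantees $\Pi_{i,a}\cap\Pi_{j,b}$ is a single affine point for all $a,b$. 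The key new feature is: within a single $\Pi_{i,a}$, the lines lying in $\Pi_{i,a}$ form (the affine part of) a projective space $\mathrm{PG}(k,q)$, so they are \emph{not} pairwise disjoint, which is why the pairing in Theorem~\ref{even-part} failed to be proper. Grouping spread elements into triples fixes this: instead of taking all affine lines inside one $\Pi_{i,a}$ as a color class, I would partition them into smaller proper families and distribute them across the three members of a triple.

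The main steps, in order, are as follows. Partition $\mathcal S$ (or $\mathcal S$ minus $\epsilon$ leftover elements) into $\frac{q^k+1-\epsilon}{3}$ triples; the $\epsilon\in\{0,1,2\}$ leftover spread elements are handled separately, each contributing $\frac{q^k-1}{q-1}$ color classes by coloring each affine line through a fixed direction (or rather, each of the $\frac{q^k-1}{q-1}$ directions in that spread element, grouped into proper classes) — this is where the ``$+\epsilon$'' and the overall factor $\frac{q^k-1}{q-1}$ come from. For each triple $\{S^i,S^j,S^\ell\}$, and for each choice of one subspace from each of $A(S^i)$, $A(S^j)$, $A(S^\ell)$, I would form color classes by taking a \emph{partial parallel class} (a proper subfamily) of the affine lines of $\mathrm{PG}(k,q)$ inside each chosen $k$-subspace and amalgamating three such subfamilies, one from each of the three chosen subspaces, into a single color class. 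Since the three subspaces pairwise meet in a single affine point, lines from different members are either disjoint or meet at that point; choosing the subfamilies to avoid that point (or to contain it in at most one of the three) keeps each color class proper. The counting then yields $\frac{q^k+1-\epsilon}{3}$ triples, times roughly $(q^k+2)$ for the combinatorics of choosing subspaces and subfamilies, times $\frac{q^k-1}{q-1}$ directions inside each $k$-subspace, giving the claimed bound $\left(\frac{q^k+1-\epsilon}{3}(q^k+2)+\epsilon\right)\frac{q^k-1}{q-1}$.

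For completeness of the coloring, I would argue exactly as in Theorem~\ref{even-part}: given two color classes $C$ and $C'$, their associated subspaces sit over spread elements, and since any two of the $q^k+1$ spread elements are distinct, at least two of the (at most six) subspaces involved lie over distinct spread elements; by Proposition~\ref{dim} those two affine $k$-subspaces meet in a point, and one checks that through that point there pass a line of $C$ and a line of $C'$. The leftover-$\epsilon$ classes intersect everything because they can be arranged to use lines through points that every $k$-subspace class also hits, or by a direct dimension count. For properness, the within-triple amalgamation described above is designed so that no two lines of the same class share a point: lines in the same chosen $k$-subspace come from a genuine partial parallel class of that $\mathrm{PG}(k,q)$, hence are disjoint, and lines from different members of the triple can only meet at the (unique, shared) affine intersection point, which the subfamily choice avoids for at least two of the three members.

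\textbf{Main obstacle.} The hard part will be the bookkeeping that makes the within-triple amalgamation simultaneously proper \emph{and} extracts the full count $(q^k+2)\frac{q^k-1}{q-1}$ per triple: I must decompose the $\frac{q^{k+1}-1}{q-1}\cdot\frac{q^k-1}{q-1}$ lines inside each $k$-subspace (the lines of $\mathrm{PG}(k,q)$, affinely) into proper pieces, then re-amalgamate across the three subspaces of a triple so that the pieces pair up without creating a monochromatic intersection at the shared points $\Pi_i\cap\Pi_j$, $\Pi_j\cap\Pi_\ell$, $\Pi_i\cap\Pi_\ell$ — and do this uniformly so the factor $(q^k+2)$ is exact rather than merely asymptotic. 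Verifying that the three pairwise intersection points inside a triple are in ``general enough'' position (e.g. not collinear in a troublesome way, or that one can always route around them) will require a small additional application of Proposition~\ref{dim} or a direct coordinate argument, and is the step most likely to need care.
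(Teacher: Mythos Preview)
Your high-level framework matches the paper's: a $(k-1)$-spread $\mathcal S$ at infinity, grouped into $\frac{q^k+1-\epsilon}{3}$ triples with $\epsilon$ leftover elements each contributing $\frac{q^k-1}{q-1}$ trivial classes, and a target of $(q^k+2)\cdot\frac{q^k-1}{q-1}$ classes per triple. But three concrete ideas are missing, and without them the plan will not close.

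\medskip
\textbf{1. The three pairwise intersections must be made to coincide.} You worry about the three points $\Pi_i\cap\Pi_j$, $\Pi_j\cap\Pi_\ell$, $\Pi_i\cap\Pi_\ell$ being in ``general enough'' position. The paper does the opposite: it introduces a \emph{fourth} spread element $d$ and uses the affine $k$-subspace $\Pi_{d,1}$ as a common transversal. The numbering of $A(e),A(f),A(g)$ is chosen so that $\Pi_{e,i}\cap\Pi_{f,i}\cap\Pi_{g,i}=\{M_i\}$ for a single point $M_i\in\Pi_{d,1}$. This collapse of three intersection points into one is what makes properness tractable: you only have to route around \emph{one} bad point, not three. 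Your phrase ``for each choice of one subspace from each of $A(S^i),A(S^j),A(S^\ell)$'' suggests an unaligned choice, which would give $(q^k)^3$ options and no control over the intersections.

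\medskip
\textbf{2. The ``$+2$'' comes from a second type of class.} With $u=\frac{q^k-1}{q-1}$, the $C$-type classes you describe (parallel lines of direction $E_j,F_j,G_j$ inside $\Pi_{e,i},\Pi_{f,i},\Pi_{g,i}$, omitting the lines through $M_i$ for two of the three) account for $q^k\cdot u$ classes per triple. The remaining $2u$ classes are of a different nature: $B^{j,0}$ consists of all lines $E_jM_1,\dots,E_jM_{q^k}$ (the lines through $M_i$ with direction $E_j$ that were deleted from the $C$-classes), and similarly $B^{j,1}$ for $F_j$. Each $B$-class spans a $(k+1)$-dimensional subspace containing $\Pi_{d,1}$. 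These are not ``partial parallel classes inside a $k$-subspace,'' so your single-type description cannot reach the exact count $(q^k+2)u$.

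\medskip
\textbf{3. Completeness within a triple is not ``exactly as in Theorem~\ref{even-part}.''} Across different triples, Proposition~\ref{dim} handles completeness easily, as you say. But for two classes $C^{i,j}$ and $C^{i',j'}$ from the \emph{same} triple with $i\neq i'$, the relevant subspaces $\Pi_{e,i},\Pi_{g,i'}$ do meet at a point $N_i$ --- yet you must check that $N_i$ is not exactly the deleted line $E_{j'}M_{i'}$. The paper needs a genuine geometric argument here (using the auxiliary numbering of points on $d,e,f,g$ via transversal lines, and a contradiction through a carefully chosen line $E_jT$) to rule out the bad coincidence. This is the most delicate step and is not covered by the dimension formula alone.
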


\begin{proof}
Again, consider the projective closure of the affine space 
$\mathrm{PG}(2k,q)=\mathrm{AG}(2k,q)\cup {\mathcal H}_{\infty }.$
The parallel classes of affine lines correspond to the points of
${\mathcal H}_{\infty },$ and the hyperplane at infinity is isomorphic to
$\mathrm{PG}(2k-1,q).$ 

Let $\mathcal{L}=\{ \ell _1,\ell _2,\dots ,\ell _{q^k+1}\} $ be a $(k-1)$-spread of 
${\mathcal H}_{\infty }.$ Consider the set of $k$-dimensional subspaces of 
$\mathrm{PG}(2k,q)$ intersecting ${\mathcal H}_{\infty }$ in $\ell_i.$ 
The affine parts of
these subspaces form a set of $q^k$ parallel $k$-dimensional subspaces in
$\mathrm{AG}(2k,q).$ Let 
$\mathcal{A}(\ell_i)=\{ \Pi _{\ell_i,1},\Pi _{\ell_i,2},\dots ,\Pi _{\ell_i,q^k}\} $ denote this set.  
By Proposition\ref{dim}, the intersection
$\Pi _{\ell_i,s}\cap \Pi _{\ell_j,t}$ is a single affine point
for all $i\neq j$ and $1\leq s,t\leq q^k.$

First, to any triple of $(k-1)$-dimensional 
subspaces $e,f,g\in \mathcal{L}$ we assign $q^k+2$ color classes as follows.
Take a fourth $(k-1)$-dimensional subspace $d\in \mathcal{L},$  
and, for $u=(q^k-1)/(q-1)$, denote the points of the $(k-1)$-dimensional 
subspaces $d,e,f$ and $g$ as 
$D_1,D_2,\dots ,D_u,$ 
$E_1,E_2,\dots ,E_u,$
$F_1,F_2,\dots ,F_u$ and 
$G_1,G_2,\dots ,G_u,$ respectively.         
For any triple $(D_i,e,g)$ there is a unique line through $D_i$ which intersects
the skew subspaces $e$ and $g.$ We can choose the numbering of the 
points $E_i$ and $G_i$, such that the line $E_iG_i$ intersects 
$d$ in $D_i$ for $i=1,2,\dots ,u;$
the numbering of the points $F_i$, such 
that the line $D_iF_{i+1}$ intersects $d$ and $g$ for $i=1,2,\dots ,u-1,$ 
and, finally choose the line $D_uF_1$ that intersects $d$ and $g.$ Notice that this construction implies that the line $D_iF_i$ 
does not intersect $g$ for $i=1,2,\dots ,u.$          
Let the points of $\Pi _{d,1}$ denote by $M_1,M_2,\dots ,M_{q^k}.$  
We can choose the numbering of the elements of
$\mathcal{A}(e),$ $\mathcal{A}(f)$ and $\mathcal{A}(g)$, such that
$\Pi _{e,i}\cap \Pi _{f,i}\cap \Pi _{g,i}=\{ M_i\}$ for
$i=1,2,\dots ,q^k.$

We define three types of color classes for $i=1,2,\dots ,u$ and $j=1,2,\dots ,q^k.$ 
Let $B^{i,0}_{e,f,g}$ and $B^{i,1}_{e,f,g}$ be the color 
classes that contain the affine parts of the lines $E_iM_j,$ and $F_iM_j,$ respectively.
Let $C^{i,j}_{e,f,g}$ be the color class that contains the affine parts of lines in $\Pi _{e,i}$ whose
point at infinity is $E_j,$ except the line $E_jM_i,$ the affine parts of lines in $\Pi _{f,i}$ whose point
at infinity is $F_j,$ except the line $F_jM_i,$ and the affine parts of lines in $\Pi _{g,i}$ 
whose point at infinity is $G_j.$ 
Hence each of $B^{i,0}_{e,f,g}$ and $B^{i,1}_{e,f,g}$ contains $q^k$ lines 
and $C^{i,j}_{e,f,g}$ contains $3q^{k-1}-2$ lines.

Notice that for each $i\in \{ 1,2,\dots ,u\} $, the union of the color classes 
$$\mathcal{K}^i_{e,f,g}=B^{i,0}_{e,f,g}\cup B^{i,1}_{e,f,g}\cup _{j=1}^{q^k}C^{i,j}_{e,f,g}$$
contains the affine parts of all lines of $\mathrm{PG}(2k,q)$ whose point at
infinity is $E_i,F_i$ or $G_i.$
Each of the two sets of lines whose affine parts belong to $B^{i,0}_{e,f,g}$ or $B^{i,1}_{e,f,g},$
naturally defines a $(k+1)$-dimensional subspace of $\mathrm{PG}(2k,q),$ we denote these subspaces by
$\Pi _{E_i}$ and $\Pi _{F_i},$ respectively.  

For $t=0,1,\dots , \lfloor (q^k-2-\epsilon )/3\rfloor $
let $e=\ell _{3t+1},$ $f=\ell _{3t+2},$ $g=\ell _{3t+3}$, $d=\ell _{3t+4}$, define $\ell_{q^k+2-\epsilon }$ as $\ell_1$, 
and make the $q^k+2$ color classes  
$B^{i,0}_{e,f,g},$  $B^{i,1}_{e,f,g}$ and $C^{i,j}_{e,f,g}.$ Finally, for each
point $P$ in the subspace $\ell _{q^k+1}$ if $\epsilon =1,$ or in $\ell _{q^k}$ if $\epsilon =2$, 
define a new color class $D^P$ which contains the affine parts of all lines whose
point at infinity is $P.$ 

Clearly, the coloring is proper and it contains, by definition, the required number of color classes. 
Now, we prove that it is complete.  
Notice that each color class of type $D^P$ obviously intersects any other color class. 
In relation to the other cases we have that: 
\begin{itemize}
\item
The color classes 
$B^{i,j}_{\ell _{3m+1},\ell _{3m+2},\ell _{3m+3}}$ 
and $B^{i',j'}_{\ell _{3m+1},\ell _{3m+2},\ell _{3m+3}}$ intersect, because both of 
them contain all affine points of the $k$-dimensional subspace 
$\Pi _{\ell _{3m+4},1}.$  
\item 
If $t\neq m$ then the color classes 
$B^{i,j}_{\ell _{3t+1},\ell _{3t+2},\ell _{3t+3}}$ and 
$B^{i',j'}_{\ell _{3m+1},\ell _{3m+2},\ell _{3m+3}}$ intersect, because the 
$(k-1)$-dimensional subspaces
$\ell _{3t+4}$ and $\ell _{3m+4}$ are skew in 
$\mathcal{H}_{\infty },$ hence the $2$-dimensional intersection 
of the $(k+1)$-dimensional subspaces $\Pi _{E_i}$ or $\Pi _{F_i},$ according as
$j=1$ or $2$, and $\Pi _{E_i'}$ or $\Pi _{F_i'},$ according as
$j'=1$ or $2,$ is not a subspace of $\mathcal{H}_{\infty }.$ Thus    
Proposition \ref{dim} implies that the intersection contains some
affine points.
\item
The color classes 
$B^{i,j}_{\ell _{3m+1},\ell _{3m+2},\ell _{3m+3}}$ 
and $C^{i',j'}_{\ell _{3t+1},\ell _{3t+2},\ell _{3t+3}}$  
intersect in both cases $m=t$ and $m\neq t,$ because the 
$(k-1)$-dimensional subspaces $\ell _{3m+4}$ and $\ell _{3t+3}$ are skew in  
$\mathcal{H}_{\infty }.$ Again, Proposition \ref{dim} implies that the 
intersection of the $k$-dimensional subspaces $\Pi _{\ell _{3m+4},1}$ (which
is a subspace of either the 
$(k+1)$-dimensional subspace $\Pi _{E_i}$ or $\Pi _{F_i},$ according as
$j=1$ or $2$) and $\Pi_ {\ell _{3m+3,i'}}$ is an affine point. 
\item 
If $t\neq m$ then each pair of color classes 
$C^{i,j}_{\ell _{3t+1},\ell _{3t+2},\ell _{3t+3}}$ and $C^{i',j'}_{\ell _{3m+1},\ell _{3m+2},\ell _{3m+3}},$ intersects since, as previously, the 
$(k-1)$-dimensional subspaces $\ell _{3t+3}$
and $\ell _{3m+3}$ are skew in $\mathcal{H}_{\infty },$ thus 
Proposition \ref{dim} implies that the point of intersection 
of the $k$-dimensional subspaces $\Pi _{\ell _{3t+3},i}$ and 
$\Pi _{\ell _{3m+3},i'}$ is in $\mathrm{AG}(2k,q).$
\item  
Finally, we prove that each pair of color classes 
$C^{i,j}_{\ell _{3t+1},\ell _{3t+2},\ell _{3t+3}}$ and $C^{i',j'}_{\ell _{3t+1},\ell _{3t+2},\ell _{3t+3}}$ intersects.
It is obvious when $i=i'.$ Suppose that $i\neq i'$, let 
$M_i=\Pi _{\ell _{3t+1},i} \cap \Pi _{\ell _{3t+2},i}\cap \Pi _{\ell _{3t+3},i}$
and $M_{i'}=\Pi _{\ell _{3t+1},i'} \cap \Pi _{\ell _{3t+2},i'}\cap \Pi _{\ell _{3t+3},i'}.$
Since the points $M_i$ and $M_{i'}$ are in $\Pi _{\ell_{3t+4},1}$, 
the line $M_iM_{i'}$ intersects $\mathcal{H}_{\infty }$ in $\ell _{3t+4}.$ Consider the point
$T=M_iM_{i'}\cap \ell _{3t+4}$ and the lines $E_jT$ and $F_jT.$ 
Clearly, at least one of these lines does not intersect 
$\ell _{3t+3}$, we may assume, without loss of generality, that the line 
$E_jT$ does not intersect the $(k-1)$-dimensional subspace
$\ell _{3t+3}$ in $\mathcal{H}_{\infty }.$

By Proposition \ref{dim}, there exist affine points
$N_i=\Pi _{\ell _{3t+1},i} \cap \Pi _{\ell _{3t+3},i'}$
and $N_{i'}=\Pi _{\ell _{3t+1},i'} \cap \Pi _{\ell _{3t+3},i}.$ 

Suppose that $N_i\in E_{j'}M_{i'}$ and $N_{i'}\in E_jM_i.$ Then
the intersection of the $(k-1)$-dimensional subspace $\ell _{3t+1}$ and 
the line $M_iM_{i'}$ is empty, hence these two subspaces generate a $(k+1)$-dimensional subspace $\Sigma _{k+1},$ 
which intersects $\mathcal{H}_{\infty }$ in a 
$k$-dimensional subspace $\Sigma _k.$ Obviously,  
$\Sigma _k$ also contains the points $E_j$ and $E_{j'}.$ Then
$\Sigma _k =\langle \ell _{3t+1},T\rangle$, and
$\Sigma _k\cap \ell _{3t+3}$ is a single point, say $U.$ 
As the lines $N_{i'}M_i$ and $N_{i}M_{i'}$ are in the $k$-dimensional subspaces $\Pi _{\ell _{3t+3},i}$ and $\Pi _{\ell _{3t+3},i'},$ respectively, 
there exist the points
$N_{i'}M_i\cap \ell _{3t+3}$ and
$N_{i}M_{i'}\cap \ell _{3t+3}$. Moreover, we have that 
$N_{i'}M_i\cap \ell _{3t+3}=N_{i}M_{i'}\cap \ell _{3t+3}=U.$
Hence the points $N_i,M_i,N_{i'}$ and $M_{i'}$ 
are contained in a $2$-dimensional subspace $\Sigma _2,$ and $\Sigma _2\cap \mathcal{H}_{\infty }$ 
contains the points $U,\, E_j,$ $E_{j'}$ and $T.$
Consequently, $\Sigma _2\cap \mathcal{H}_{\infty }$ is the line $E_jT$ and it contains the point $U,$
thus $E_jT$ intersects the subspace $\ell _{3t+3},$ contradiction.  

Thus $N_i\not\in E_{j'}M_{i'}$ or $N_{i'}\not\in E_jM_i.$ This implies that
$N_i$ or $N_{i'}$ is a common point of the color classes 
$C^{i,j}_{\ell _{3t+1},\ell _{3t+2},\ell _{3t+3}}$ and
$C^{i',j'}_{\ell _{3t+1},\ell _{3t+2},\ell _{3t+3}}.$ 
Hence, each pair of color classes 
$C^{i,j}_{\ell _{3t+1},\ell _{3t+2},\ell _{3t+3}}$, $C^{i',j'}_{\ell _{3t+1},\ell _{3t+2},\ell _{3t+3}}$ intersects.
\end{itemize}
In consequence, the coloring is complete.
\end{proof}

To conclude this section we prove our third main theorem.

\begin{proof}[Proof of Theorem \ref{thm:tres}]
As $v=q^{2k},$ from Theorem \ref{teo-achro} we get
$$ 
\begin{aligned}
\left( \frac{q^k+1-\epsilon }{3}(q^k+2)+\epsilon \right)\frac{q^k-1}{q-1} 
& =\frac{q^{3k}+(2-\epsilon )q^{2k}+(2\epsilon -1)q^{k}-2-\epsilon }{3(q-1)} \\
 & =\frac{1}{3} \frac{\sqrt{v}(v-1)}{q-1}+\frac{(2-\epsilon )v+2\epsilon \sqrt{v}-2-\epsilon }{3(q-1)}, 
 \end{aligned}
 $$
which proves the statement.
\end{proof}

%%%%%%%%%%%%%%%%%%%%%%%%%%%%%%%%%%%%%%%%%%%%%%%%%%%%%%%%%%%%%%%%%%%%%%%%%%%%%%%%%%%%%%%%%%%%%%%%%%%%%%%%%%%%

\section{Small dimensions}\label{section4}

In this section, we improve the previous bounds for dimensions two and three. 
First, we prove the exact values of achromatic and pseudoachromatic indices of finite affine planes. 
Due to the fact that there exist non-desarguesian affine planes, we use the notation $\mathrm{A}_q$ for an
arbitrary affine plane of order $q.$ For the axiomatic definition of $\mathrm{A}_q$ we refer to \cite{dem}. The
basic combinatorial properties of $\mathrm{A}_q$ are the same as of $\mathrm{AG}(2,q)$.

\begin{theorem}\label{thm:chrom_achrom_affine} 
Let $\mathrm{A}_q$ be any affine plane of order $q.$ Then
\[\chi'(\mathrm{A}_q)=\alpha'(\mathrm{A}_q)=q+1.\]
\end{theorem}

\begin{proof}
There are $q+1$ parallel classes of lines in $\mathrm{A}_q,$ let $\mathscr{S}_1,\mathscr{S}_2,\ldots,\mathscr{S}_{q+1}$ 
denote them. The lines in each class give a partition of the set of point of $\mathrm{A}_q$ and two lines have a point in common if and
only if they belong to distinct parallel classes. Hence, if we define a coloring $\phi $ with $q+1$ colors such that a line $\ell $ gets color
$i$ if and only if $\ell \in \mathscr{S}_i$ then $\phi $ is proper. This shows the inequality $q+1\leq \chi'(\mathrm{A}_q).$

Since $\chi'(\mathrm{A}_q)\leq\alpha'(\mathrm{A}_q),$ it is enough to prove that $\alpha'(\mathrm{A}_q)\leq q+1.$ 
Suppose to the contrary that $\alpha'(\mathrm{A}_q)\geq q+2,$ and let $\psi $ be a complete and proper coloring with 
more than $q+1$ color classes, say $C_1,C_2,\ldots,C_m.$ As $\psi $ is proper, each color class must be a subset of a parallel class. 
There are more color classes than parallel classes, hence, by the pigeonhole principle, there are at least two color classes that are
subsets of the same parallel class. If $C_i, C_j\subset \mathscr{S}_k$ then the elements of $C_i$ have empty intersection with 
the elements of $C_j$ contradicting to the completeness of $\psi.$ 
Thus $\alpha'(\mathrm{A}_q)\leq q+1,$ the theorem is proved.   
\end{proof}

\begin{theorem} 
\label{thm:upper}
Let $\mathrm{A}_q$ be any affine plane of order $q.$ Then
$$\psi' (\mathrm{A}_q)=\left\lfloor \tfrac{(q+1)^2}{2} \right\rfloor.$$
\end{theorem}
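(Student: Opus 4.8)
The plan is to establish the two bounds $\psi'(\mathrm{A}_q)\le \lfloor (q+1)^2/2\rfloor$ and $\psi'(\mathrm{A}_q)\ge \lfloor (q+1)^2/2\rfloor$ separately, using throughout only the combinatorial structure of $\mathrm{A}_q$ (the $q^2+q$ lines split into $q+1$ parallel classes of $q$ lines, and two distinct lines meet exactly when they are not parallel). The key observation is that, since the color classes of a line-coloring partition the lines, two classes $C_i$ and $C_j$ with $i\ne j$ contain a pair of intersecting lines if and only if the lines of $C_i\cup C_j$ do \emph{not} all belong to one common parallel class. In particular, a class containing two non-parallel lines automatically meets every other nonempty class, since any further line is parallel to at most one of the two.

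For the upper bound I would take an arbitrary complete coloring with $k$ classes, let $s$ be the number of singleton classes, and count lines: the remaining $k-s$ classes have at least two lines each, so $q^2+q\ge s+2(k-s)=2k-s$, i.e.\ $2k\le q^2+q+s$. Completeness forces any two singleton classes to consist of non-parallel lines (parallel lines never meet), so the singleton lines lie in pairwise distinct parallel classes and $s\le q+1$. Hence $2k\le q^2+q+(q+1)=(q+1)^2$, which gives $k\le\lfloor (q+1)^2/2\rfloor$.

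For the lower bound I would build an explicit complete coloring attaining this value. Choose one line from each of the $q+1$ parallel classes and let each be a singleton class; these $q+1$ classes pairwise see each other since the chosen lines are pairwise non-parallel. The remaining $q^2-1$ lines, which form $q-1$ lines in each parallel class, I would distribute into classes each containing two non-parallel lines: when $q$ is odd, $q^2-1$ is even and a partition into $(q^2-1)/2$ non-parallel pairs exists because the largest remaining parallel class has only $q-1\le (q^2-1)/2$ lines (the standard matching condition); when $q$ is even, $q^2-1$ is odd, so I would first set aside one class of three lines not all parallel and then pair up the remaining $q^2-4$ lines as before. By the key observation the coloring is complete, and counting gives $(q+1)+(q^2-1)/2=(q+1)^2/2$ classes for odd $q$ and $(q+1)+(q^2-4)/2+1=q(q+2)/2$ classes for even $q$, which is $\lfloor (q+1)^2/2\rfloor$ in both cases.

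The routine part is checking that the leftover lines can be matched into non-parallel pairs (a one-factor-type degree count) and dealing with the smallest case $q=2$ directly; the only genuinely delicate point is the parity split for even $q$, which forces the single exceptional triple and is exactly what produces the floor in the statement.
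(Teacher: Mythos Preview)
Your argument is correct and follows essentially the same strategy as the paper. The upper bound is the identical counting idea (singletons must lie in distinct parallel classes, so $s\le q+1$, forcing $2k\le (q+1)^2$); the paper phrases it as a contradiction starting from $k=\lfloor (q+1)^2/2\rfloor+1$, but the content is the same. For the lower bound both you and the paper take $q+1$ singleton classes, one per parallel class, and group the remaining $q^2-1$ lines into classes each containing two non-parallel lines (with one triple when $q$ is even). The only difference is that the paper makes this explicit---it takes the $q+1$ lines through a fixed point as the singletons and lists the remaining lines in an order where consecutive lines are non-parallel---whereas you invoke a matching existence argument via the ``largest part at most half the total'' condition on the complete $(q+1)$-partite graph. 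Your route is a bit slicker but less constructive; both are fine.
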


\begin{proof}
First, we prove that $\psi' (\mathrm{A}_q)\leq \left\lfloor \tfrac{(q+1)^2}{2} \right\rfloor.$
Suppose to the contrary that there exists a complete
coloring $\varphi $ of $\mathrm{A}_q$ with $\left\lfloor \tfrac{(q+1)^2}{2} \right\rfloor + 1$ color classes. 
As $\mathrm{A}_q$ has $q^2+q$ lines, this implies that
$\varphi $ has at most $q^2+q- \left( \lfloor \tfrac{(q+1)^2}{2} \rfloor + 1 \right) $ color classes of 
cardinality greater than one. Thus, there are at least 
$$\left\lfloor \tfrac{(q+1)^2}{2} \right\rfloor + 1 - \left( q^2+q- \left( \left\lfloor \tfrac{(q+1)^2}{2} \right\rfloor + 1 \right) \right)
=\begin{cases}
q + 2, & \text{if  } q \text{ is even},\\
q + 3, & \text{if } q \text{ is odd},
\end{cases}
$$ 
color classes of size one. 
Hence, again by the pigeonhole principle, there are at least two color classes of size one such that they belong to the
same parallel class. This means that they have empty intersection, so $\varphi $ is not complete. 
This contradiction shows that $\psi' (\mathrm{A}_q)\leq \left\lfloor \tfrac{(q+1)^2}{2} \right\rfloor.$

We continue to give a complete coloring of $\mathrm{A}_q$ with $\left\lfloor \tfrac{(q+1)^2}{2} \right\rfloor$ color classes. 
Let $P$ be a point and $e_1,e_2,\dots , e_{q+1}$ be the lines through $P$. For $i=1,2,,\dots ,q+1$ 
let $\mathscr{S}_i$ be the parallel class containing $e_i$ and denote the $q-1$ lines in the set  
$\mathscr{S}_i\setminus \{ e_i\} $ by $\ell _i, \ell _{(q+1)+i},\dots ,\ell _{(q-2)(q+1)+i}.$ Then
$$\bigcup _{i=1}^q \left( \mathscr{S}_i\setminus \{ e_i\} \right) =\{ \ell _1,\ell _2, \dots ,\ell _{q^2-1}\} ,$$
and the lines $\ell _j$ and $\ell _{j+1}$ belong to distinct parallel classes for all $1\leq j <q^2-1.$ 
For better clarity, we construct $q+1$ color classes with even indices and 
$\left\lfloor \tfrac{q^2-1}{2} \right\rfloor $ color classes with odd indices.
Let the color class $C_{2k}$ consist of one element, the line $e_k,$ for $k=1,2,\dots , q+1,$ 
let the color class $C_{2k-1}$ contain the lines $\ell _{2k-1}$ and $\ell _{2k}$ for $k=1,2,\dots ,\left\lfloor \tfrac{q^2-1}{2} \right\rfloor ,$
finally, if $q$ is even, let the color class $C_{q^2-3}$ contain the line $\ell _{q^2-1},$ too.

The coloring is complete, because color classes having even indices intersect at $P,$ and each color class with odd index contains
two non-parallel lines whose union intersects all lines of the plane.  
\end{proof}

\smallskip
Our last construction gives a lower bound for the achromatic index of $\mathrm{AG}(3,q)$.
As $\alpha'(\mathrm{AG}(3,q))\leq \psi '(\mathrm{AG}(3,q)),$ this can be considered as well as 
lower estimate on the pseudoachromatic index of 
$\mathrm{AG}(3,q)$ and this bound is better than the general one proved in Theorem \ref{odd-part}.
We use the cyclic model of $\mathrm{PG}(2,q)$ to make the coloring. The detailed description of this
model can be found in \cite[Theorem 4.8 and Corollary 4.9]{jwph3}. We collect the most important 
properties of the cyclic model in the following proposition.

\begin{proposition}
\label{cyclicrepr}
Let $q$ be a prime power. Then the group $\mathbb{Z}_{q^2+q+1}$ admits a perfect difference
set $D=\{ d_0,d_1,d_2,\dots , d_q\} ,$ that is the $q^2+q$ integers $d_i-d_j$ are all
distinct modulo $q^2+q+1.$ We may assume without loss of generality that $d_0=0$ and $d_1=1.$
The points and lines of the plane $\mathrm{PG}(2,q)$ can be represented in the following way.
The points are the elements of $\mathbb{Z}_{q^2+q+1},$ the lines are the subsets 
$$D+j=\{ d_i+j\, \colon \, d_i\in D\} $$
for $j=0,1,\dots ,q^2+q,$ and the incidence is the set-theoretical inclusion. 
\end{proposition}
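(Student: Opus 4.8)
The plan is to build the classical \emph{Singer cyclic model} of $\mathrm{PG}(2,q)$ and read the difference set off it. First I would regard $\mathrm{GF}(q^{3})$ as a $3$-dimensional vector space over its subfield $\mathrm{GF}(q)$, so that the points of $\mathrm{PG}(2,q)$ are the $1$-dimensional $\mathrm{GF}(q)$-subspaces, equivalently the cosets of $\mathrm{GF}(q)^{*}$ in the cyclic group $\mathrm{GF}(q^{3})^{*}$; there are $(q^{3}-1)/(q-1)=q^{2}+q+1=:n$ of them. Fixing a generator $\omega$ of $\mathrm{GF}(q^{3})^{*}$, multiplication by $\omega$ is $\mathrm{GF}(q)$-linear and invertible, hence sends linear subspaces to linear subspaces and induces a collineation $\sigma$ of $\mathrm{PG}(2,q)$. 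I would then check that $\sigma$ has order exactly $n$: since a cyclic group has a unique subgroup of each order, $\mathrm{GF}(q)^{*}$ (which has order $q-1$) equals $\langle\omega^{n}\rangle$, so $\sigma^{k}=\mathrm{id}$ iff $\omega^{k}$ fixes every coset iff $\omega^{k}\in\mathrm{GF}(q)^{*}$ iff $n\mid k$. Hence $G:=\langle\sigma\rangle$ is cyclic of order $n$ and acts regularly on the $n$ points (the orbit of the coset of $1$ already runs through all $n$ cosets of $\mathrm{GF}(q)^{*}$).

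Next I would fix a line $\ell_{0}$ and a base point $P_{0}\in\ell_{0}$, identify the point $\sigma^{i}(P_{0})$ with $i\in\mathbb{Z}_{n}$, and set $D=\{\,i\in\mathbb{Z}_{n}:\sigma^{i}(P_{0})\in\ell_{0}\,\}$, so $0\in D$ and $|D|=q+1$. A one-line computation then shows that the line $\ell_{j}:=\sigma^{j}(\ell_{0})$ corresponds to the translate $D+j$, because $\sigma^{i}(P_{0})\in\ell_{j}\iff\sigma^{i-j}(P_{0})\in\ell_{0}\iff i\in D+j$. The step that needs an argument is that the $n$ translates $D+j$ are pairwise distinct, i.e.\ that the $G$-orbit of $\ell_{0}$ has length $n$; this is where I would use the arithmetic of $n$. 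If that orbit has length $m$ then $m\mid n$, and since $G$ is transitive on points and permutes this orbit, every point lies on the same number $r$ of its lines; counting incidences between points and lines of the orbit gives $nr=m(q+1)$, so $n\mid m(q+1)$. As $q^{2}+q+1=q(q+1)+1$ we have $\gcd(n,q+1)=1$, forcing $n\mid m$ and therefore $m=n$. Hence the $n$ translates of $D$ are all different, so they are exactly the $n$ lines of $\mathrm{PG}(2,q)$.

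It then remains to verify that $D$ is a perfect difference set and that incidence becomes set inclusion. Inclusion is immediate from the displayed equivalence: the point $i$ is on the line $D+j$ precisely when $i\in D+j$. For the difference-set property I would argue by counting: there are $q(q+1)=n-1$ ordered pairs of distinct elements of $D$ and $n-1$ nonzero residues, so it suffices to show distinct such pairs give distinct differences. If $d_{1}-d_{2}\equiv d_{3}-d_{4}\equiv t\not\equiv 0$ with $d_{1}\ne d_{2}$ and $d_{3}\ne d_{4}$, then $\sigma^{t}$ carries $P_{d_{2}},P_{d_{4}}\in\ell_{0}$ to $P_{d_{1}},P_{d_{3}}$, which therefore lie on $\sigma^{t}(\ell_{0})=\ell_{t}$; but $d_{1},d_{3}\in D$ also puts $P_{d_{1}},P_{d_{3}}$ on $\ell_{0}$, so both lie in $\ell_{0}\cap\ell_{t}$, a single point since $\ell_{0}\ne\ell_{t}$. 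Hence $P_{d_{1}}=P_{d_{3}}$, so $d_{1}=d_{3}$ and then $d_{2}=d_{4}$. Finally, to normalise so that $d_{0}=0$ and $d_{1}=1$, I would note that $1$ occurs (uniquely) as a difference $d_{i}-d_{j}$ of elements of $D$, and replace $D$ by $D-d_{j}$: this set contains both $0$ and $1$ and has the same family of translates, hence gives the same incidence structure.

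The main obstacle is essentially the only non-formal part: the two number-theoretic facts about $n=q^{2}+q+1$. The identification $\mathrm{GF}(q)^{*}=\langle\omega^{n}\rangle$ is what pins down the order of the Singer cycle, and the coprimality $\gcd(q^{2}+q+1,q+1)=1$ is what rules out a proper power of $\sigma$ stabilising $\ell_{0}$ and hence guarantees that the $n$ translates of $D$ are distinct. Once these are in place, everything else follows formally from the regular action of $G$ on the points together with the fact that two distinct lines of a projective plane meet in exactly one point.
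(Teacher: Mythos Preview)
Your argument is correct and is precisely the classical Singer construction: realise $\mathrm{PG}(2,q)$ via $\mathrm{GF}(q^3)/\mathrm{GF}(q)$, use multiplication by a primitive element to get a cyclic collineation group of order $n=q^2+q+1$ acting regularly on points, and read off a $(q^2+q+1,q+1,1)$-difference set from one line. The two arithmetic points you isolate---that $\mathrm{GF}(q)^*=\langle\omega^n\rangle$ forces $\sigma$ to have order exactly $n$, and that $\gcd(q^2+q+1,q+1)=1$ forces the line orbit to be full---are exactly the right hinges, and your counting/intersection argument for the difference property is clean.

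As for the comparison: the paper does not actually prove this proposition. It is stated as a summary of known facts about the cyclic model and is attributed to Hirschfeld \cite[Theorem~4.8 and Corollary~4.9]{jwph3}. What you have written is essentially the proof one finds there (or in Singer's original paper), so there is no methodological difference to discuss---you have simply supplied the argument that the paper outsources to a reference.
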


\begin{theorem}
The achromatic index of $\mathrm{AG}(3,q)$ satisfies
the inequality:
$$\frac{q(q+1)^2}{2}+1  \leq\alpha'(\mathrm{AG}(3,q)).$$
\end{theorem}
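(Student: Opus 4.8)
The plan is to build a complete and proper line-coloring of $\mathrm{AG}(3,q)=\mathrm{PG}(3,q)\setminus\mathcal H_\infty$ using a well-chosen structure on the plane at infinity $\mathcal H_\infty\cong\mathrm{PG}(2,q)$, which I will represent via the cyclic model of Proposition \ref{cyclicrepr}. The target count $\frac{q(q+1)^2}{2}+1$ strongly suggests the following scheme: pick a partition of the $q^2+q+1$ points of $\mathcal H_\infty$ into one distinguished point $Q$ and $\frac{q^2+q}{2}$ pairs of points (possible exactly when $q$ is even; for $q$ odd one must instead pair up all but suitably many points, or pair lines of a pencil — I will use the difference-set cyclic action $x\mapsto x+\frac{q^2+q+1}{2}$ when available, otherwise a pencil-based pairing as in the proof of Theorem \ref{even-part}). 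Each pair $(U,V)$ of points at infinity will generate $q$ color classes, and together with the single class $C_1$ consisting of all affine lines through $Q$ this yields $1+q\cdot\frac{q^2+q}{2}=\frac{q(q+1)^2}{2}+1$ classes.

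The key construction step: for a point $U\in\mathcal H_\infty$, the affine parts of the $q$ planes of $\mathrm{PG}(3,q)$ through a fixed line of $\mathcal H_\infty$ through $U$ give $q$ parallel affine planes; but since we need the coloring to be \emph{proper}, I cannot reuse the approach of Theorem \ref{odd-part} verbatim (there the classes $C_{i,j,m}$ contain two intersecting lines). Instead, for each pair $(U,V)$ I will choose a line $\lambda_U$ through $U$ and a line $\lambda_V$ through $V$ in $\mathcal H_\infty$ that are \emph{distinct} but may share a point, partition the affine lines with point at infinity $U$ into $q$ parallel affine planes $\Pi_{U,1},\dots,\Pi_{U,q}$ (the affine parts of the planes of $\mathrm{PG}(3,q)$ meeting $\mathcal H_\infty$ exactly in $\lambda_U$), do likewise for $V$, and let color class $C_{U,V,i}$ consist of the lines of $\Pi_{U,i}$ through $U$ together with the lines of $\Pi_{V,\sigma(i)}$ through $V$ for a suitable matching $\sigma$. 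Within such a class the $U$-lines are mutually parallel and the $V$-lines are mutually parallel; properness across the two halves holds because a line with point at infinity $U$ and a line with point at infinity $V\ne U$ are never parallel, and within a single $C_{U,V,i}$ the two affine planes $\Pi_{U,i},\Pi_{V,\sigma(i)}$ must be chosen to be \emph{disjoint} (no common affine point), which by Proposition \ref{dim} happens precisely when $\lambda_U$ and $\lambda_V$ meet in $\mathcal H_\infty$ — this is exactly why the cyclic/difference-set model is convenient, as it lets me prescribe such a line through each pair.

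For completeness I must check that any two classes share an intersecting pair of lines. The class $C_1$ meets everything at $Q$. For $C_{U,V,i}$ and $C_{W,Z,j}$ with $\{U,V\}\ne\{W,Z\}$, at least one of $U,V$ differs from at least one of $W,Z$, say $U\ne W$; then a $U$-line of $\Pi_{U,i}$ and a $W$-line of $\Pi_{W,j}$ lie in affine planes whose infinite traces $\lambda_U,\lambda_W$ are distinct lines of $\mathcal H_\infty$, hence meet in a point of $\mathcal H_\infty$, so by Proposition \ref{dim} the two $2$-flats $\langle\lambda_U,\text{direction}\rangle$ and $\langle\lambda_W,\text{direction}\rangle$ meet in an affine line, and that affine line hits one line from each class — giving a common affine point. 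The remaining sub-case $\{U,V\}=\{W,Z\}$, $i\ne j$ within the same pair needs a separate argument: here I rely on the matching $\sigma$ being chosen so that $\Pi_{U,i}$ and $\Pi_{V,\sigma(j)}$ (with $i\ne j$) still meet, which again follows from $\lambda_U\ne\lambda_V$ via Proposition \ref{dim}.

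The main obstacle I anticipate is the parity issue and the explicit choice, in the cyclic model, of a consistent family of lines $\{\lambda_U\}$ at infinity that simultaneously (i) assigns to each pair $(U,V)$ a common line, (ii) keeps $\lambda_U\ne\lambda_V$ for $U,V$ in different pairs so that completeness goes through, and (iii) handles the leftover points when $q$ is odd so the total is still exactly $\frac{q(q+1)^2}{2}+1$; getting the bookkeeping of color-class cardinalities and the $+1$ term to come out exactly right (and verifying no two singleton-ish classes are mutually parallel) will be the delicate part, whereas the geometric intersection claims are all routine consequences of Proposition \ref{dim}.
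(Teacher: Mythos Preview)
Your proposal has two concrete gaps that prevent it from going through.

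\textbf{The count is wrong.} With one distinguished point $Q$ and $\frac{q^2+q}{2}$ pairs, assigning $q$ color classes to each pair yields
\[
1+q\cdot\frac{q^2+q}{2}=\frac{q^2(q+1)}{2}+1,
\]
which is \emph{not} $\frac{q(q+1)^2}{2}+1$; you are short by $\frac{q(q+1)}{2}$ classes. The paper obtains the correct total by assigning $q+1$ classes to each pair, not $q$.

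\textbf{The properness mechanism is incorrect.} You claim that $\Pi_{U,i}$ and $\Pi_{V,\sigma(i)}$ are disjoint in $\mathrm{AG}(3,q)$ precisely when $\lambda_U$ and $\lambda_V$ meet in $\mathcal H_\infty$. But any two lines of $\mathcal H_\infty\cong\mathrm{PG}(2,q)$ meet, and two planes of $\mathrm{PG}(3,q)$ with \emph{distinct} traces on $\mathcal H_\infty$ always intersect in a line containing exactly one point at infinity and $q$ affine points. Hence with $\lambda_U\neq\lambda_V$ your class $C_{U,V,i}$ always contains intersecting $U$-lines and $V$-lines, so the coloring is not proper. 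Taking $\lambda_U=\lambda_V$ would fix properness but then destroys completeness among classes with the same pair, as your own final paragraph essentially anticipates.

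The paper resolves both issues simultaneously. For the pair $(P_i,P_{i+1})$ it uses two \emph{distinct} lines of $\mathcal H_\infty$ (so the associated affine plane pencils are transverse), accepts that $\Pi_{P_i,j}\cap\Pi_{P_{i+1},j}$ is an affine line $e^i_j$, and then \emph{removes} $e^i_j$ from one half of the class $C^i_j$. The removed lines $e^i_1,\dots,e^i_q$ are collected into an additional class $C^i_0$; the cyclic difference-set structure is used to arrange that all $e^i_j$ pass through a common point at infinity and lie in a common plane $W_i$, so that $C^i_0$ is itself proper and meets the other classes. This extra class per pair is exactly what produces $q+1$ classes per pair and the correct total, and it is the idea missing from your sketch.
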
  

\begin{proof}
Consider the projective closure of the affine space, let
$\mathrm{PG}(3,q)=\mathrm{AG}(3,q)\cup {\mathcal H}_{\infty }.$
Then the parallel classes of affine lines correspond to the points of
${\mathcal H}_{\infty }.$ This plane is isomorphic to
$\mathrm{PG}(2,q),$ hence it has a cyclic representation (described in Proposition \ref{cyclicrepr}).
Let $v=q^2+q+1,$ let the points and the lines 
of ${\mathcal H}_{\infty }$ be $P_1,P_2,\dots ,P_v,$ and
$\ell _1,\ell _2,\dots ,\ell _v,$ respectively. We can choose the 
numbering such that for $i=1,2,3,\dots ,v$ the line $\ell _i$ contains the
points $P_i,$ $P_{i+1}$ and $P_{i-d}$ (where $0\neq d\neq 1$ is a fixed element 
of the difference set $D,$ and the subscripts are taken modulo $v$).

The affine parts of the planes of 
$\mathrm{PG}(3,q)$ intersecting ${\mathcal H}_{\infty }$ in a fixed line $\ell_i$ 
form a set of $q$ parallel planes in
$\mathrm{AG}(3,q).$ 
We denote this set by  
$A(P_i)=\{ \Pi _{P_i,1},\Pi _{P_i,2},\dots ,\Pi _{P_i,q}\} .$ 
Let $W_i$ be a plane of 
$\mathrm{PG}(3,q)$ intersecting ${\mathcal H}_{\infty }$ in $\ell _{i-d}.$
Then each element of $A(P_i)\cup A(P_{i+1})$ intersects $W_i$ in a line which 
passes on the point $P_i,$ so we can choose the
numbering of the elements of $A(P_i)$ and $A(P_{i+1})$, such that $\Pi _{P_i,j}\cap \Pi _{P_{i+1},j}\subset W_i$ 
for $i=1,3,\dots ,v-2$ and $j=1,2,\dots ,q.$ 
Let $e^i_j$ denote the line $\Pi _{P_i,j}\cap \Pi _{P_{i+1},j}.$

We assign $q+1$ color classes to the pair $(P_i,P_{i+1})$ for $i=1,3,\dots ,v-2.$
Let the color class $C^i_0$ contain 
the affine parts of the lines $e^i_1,e^i_2\dots ,e^i_q.$ 
For $j=1,2,\dots ,q$, let the color class $C^i_j$ contain the 
parallel lines of $\Pi_{P_i,j}$ passing on $P_i$ except the line $e^i_j,$ 
and the $q$ parallel lines of $\Pi_{P_{i+1},j}$ passing on $P_{i+1}$.    
Finally, let the color class $C^v$ contain the affine parts of all lines through $P_v.$ 
In this way we constructed
$$(q+1)\frac{v-1}{2}+1=\frac{q(q+1)^2}{2}+1 $$
color classes and each line belongs to exactly one of them, because
$C^i_0$ contains $q$ lines, $C^i_j$ contains 
$2q-1$ lines for each $j=1,2,\dots ,q.$ and $C^v$ contains $q^2$ lines.

The coloring is proper by definition. 
The color class $C^v$ obviously intersects any other class. 
For other pairs of color classes, two major cases are distinguished when we prove the completeness.  
On the one hand, if $i\neq k$ then we have:
\begin{itemize} 
\item
$C^i_0\cap C^k_0\neq  \emptyset ,$ because the planes $W_i$ and $W_k$ intersect each other,
\item
if $j>0$ then $C^i_0\cap C^k_j\neq  \emptyset ,$ because the planes $W_i$ and $\Pi_{P_{k+1},j}$ intersect each other,
\item
if $m>0$ and $j>0$ then $C^i_m\cap C^k_j\neq  \emptyset ,$ because the planes $\Pi_{P_{i+1},m}$ and $\Pi_{P_{k+1},j}$ intersect each other. 
\end{itemize}
On the other hand, color classes having the same superscript also have non-empty intersection:
\begin{itemize} 
\item
$C^i_0\cap C^i_j\neq  \emptyset,$ because the planes $W_i$ and $\Pi_{P_{i+1},j}$ intersect each other, 
\item
if $j\neq k$ then the planes $\Pi_{P_{i},j}$ and $\Pi_{P_{i+1},k}$
intersect in a line $f$ and $f\neq e^i_j,$ hence its points are not removed from $\Pi_{P_{i},j},$ so
$C^i_j\cap C^i_k\neq  \emptyset .$ 
\end{itemize}

Hence the coloring is also complete, this proves the theorem.
\end{proof}

\section*{Acknowledgments}

The authors gratefully acknowledge funding from the following sources:
Gabriela Araujo-Pardo was partially supported by CONACyT-M{\' e}xico under Projects 178395, 166306, 
and by PAPIIT-M{\' e}xico under Project IN104915. Gy{\" o}rgy Kiss was partially supported by the 
bilateral Slovenian-Hungarian Joint Research Project, grant no. NN 114614 (in Hungary) and N1-0032 (in Slovenia), 
and by the Hungarian National Foundation for Scientific Research, grant no. K 124950. 
Christian Rubio-Montiel was partially supported by a CONACyT-M{\' e}xico Postdoctoral fellowship, 
and by the National scholarship programme of the Slovak republic. 
Adri{\' a}n V{\' a}zquez-{\' A}vila was partially supported by SNI of CONACyT-M{\' e}xico.

%---------------------------- Bibliography -------------------------------

\end{document}